    \theoremstyle{plain}
      \newtheorem{theorem}{Theorem}[section]
	\newtheorem{proposition}{Proposition} 
      \newtheorem{lemma}[theorem]{Lemma}
      \newtheorem{corollary}[theorem]{Corollary}
      \theoremstyle{definition}
      \newtheorem{definition}[theorem]{Definition}
      \theoremstyle{remark}
      \newtheorem{remark}[theorem]{Remark}
\begin{document}


\title[algebraic closure operators]{The lattice of algebraic closure operators}

\author[M. L. H. Kilpack]{Martha Lee Hollist Kilpack}

  \email{Martha.Kilpack@oneonta.edu}
\address{Department of Mathematics, Computer Science, and Statistics \\
SUNY Oneonta\\New York 13820\\USA}


\thanks{The author thanks her understanding advisor Fernando Guzman.}


\subjclass[2010]{Primary: 06A15; Secondary: 06B23, 06B30.}

\keywords{closure operator, algebraic lattice}

\begin{abstract}
On an infinite set some closure operators are finitary (algebraic) while others are not.  We can generalize this idea for a complete algebraic lattice letting the compact elements act as the finite sets.  With this in mind, we will consider the set of algebraic closure operators on such a lattice.  We will show this set forms a complete lattice that is also an algebraic lattice.



\end{abstract}

\maketitle


\section{Introduction}\label{S:intro}
Birkhoff \cite{top_B}, Ore \cite{ore}, and Ward \cite{ward}, were among the first to study the set of closure operators which are mappings from a given set to itself or a complete lattice to itself.  Birkhoff \cite{top_B} showed this set of mappings, acting on a set or a complete lattice, forms a complete lattice. 

More recently Ranzato \cite{ran-closures} has looked at the set of closure operators which are mappings from a given partially ordered set to itself and sufficient conditions on the partial order which make the set of closure operators a complete lattice.  

In the above mentioned cases and others, little if at all, is said about algebraic or finitary closure operators.  On an infinite set $S$, we can consider the set of closure operators, $c.o.(S)$, but also the subset of algebraic closure operators, $a.c.o(S)$. We will consider these algebraic closure operators and what this set looks like.  We would like to look more generally than sets and look at lattices. When looking at closure operators acting on lattices rather than acting on a  set we can generalize the idea of algebraic closure operators.  To do this will need elements of the lattice to act as finite subsets do in a set.  With this in mind we restrict our lattice to be an algebraic lattice.  The compact elements of the algebraic lattice  act as the finite subsets do in the lattice of subsets.  

Once we have this generalization we can consider the set of closure operators which are mappings of a given algebraic lattice $L$, $c.o.(L)$, and its subset of the algebraic closure operators of $L$, $a.c.o.(L)$.

\vspace{2mm}\noindent{\bf Proposition 3.7}. {\it 
 Let $L$ be an algebraic lattice. Then $a.c.o.(L)$ is a sublattice of $c.o.(L)$.  
}\vspace{2mm}

\vspace{2mm}\noindent{\bf Theorem 3.9}. {\it 
Let $L$ be an algebraic lattice. Then $a.c.o.(L)$ is a complete lattice. For a family $(\phi_i| i\in I)$ with $\phi_i \in a.c.o.(L)$ and $x \in L$,  
$$
\left( \bigwedge_{i\in I} \phi _i\right) (x) = \bigvee _{k \leq_c  x} \left( \bigwedge _{i \in I} \phi _i (k) \right) \hspace{.05in} \textnormal{and} \hspace{.05in}  \left( \bigvee_{i \in I}\phi_i \right) (x) = \bigvee_{j \in I^k,  k \geq 0} \phi_{j_1} \phi_{j_2} ...\phi_{j_k} (x). 
$$ 
}\vspace{2mm}

The final section will look at the complete lattice of $a.c.o.(L)$ find compact elements and then showing these elements generate $a.c.o.(L)$, making $a.c.o.(L)$ an algebraic lattice.  

When looking at the lattice of closure operators one might also consider closure systems.  The lattice of all closure systems from a given lattice $L$ is the dual of $c.o.(L)$ \cite{ore}. By duality we would find that the closure systems related to algebraic closure operators would be a sub-lattice of the the lattice of all the closure systems for a given lattice. Duality does not give us the same for showing the lattice of algebraic closure systems is an algebraic lattice. We will say a discussion of this for another time.

\section{Preliminaries}\label{prelims}

We will first be reminded of the following useful items from lattice theory and closure operator theory.    

A lattice is a non-empty partially ordered set $L$ such that for all 
$a$ and $b$ in $L$ both $a\vee b:= \sup\{a,b\}$ and $a \wedge b := \inf \{a,b\}$ exist.  
A partially ordered set $L$ is called a complete lattice when for each of the subsets $S$ of $L$ the $\sup\{S\}$ and the $\inf\{ S\}$ exists in $L$ \cite[Ch. I Sec. 4]{latticeB}.  An element $c$ in a lattice is compact if $ c \leq \bigvee_{i \in I} x_i$ implies $ c \leq \bigvee_{i \in F} x_i$ for some finite $F \subseteq I$.  We will let $C(L) := \{c \in L | c \textnormal{ is compact } \}$, and let $k \leq_c  x$ denote that $k \leq x$ and $k \in C(L)$.  A lattice for which every element is the join of compact elements is called compactly generated or algebraic \cite[Ch. VIII Sec 4$\&$ 5]{latticeB}.  
 
We will note that in an algebraic lattice to show $x \leq y$ it is enough to show for all $k \leq_c x$, we have $k \leq y$. 

\begin{definition}\label{closure_lat}\cite[Ch.V Sec. 1]{latticeB} Given a lattice $L$, a mapping $\phi:L \rightarrow L$ is called a \textsl{closure operator} on $L$ if for $a, b \in L$, it satisfies: \\
C1: $a \leq \phi(a)$ \hfill  (extensive)\\
C2: $\phi(\phi(a)) = \phi(a)$ \hfill (idempotent)\\
C3: If $a \leq b$ then $\phi(a) \leq \phi(b)$ \hfill (isotone) \\
Let $c.o.(L) :=  \left\{ \phi | \phi \textnormal{ is a closure operator} \right\}$
\end{definition}

For mapping which act of the power set lattice there is the idea of finitary mappings.  For a set $S$ a mapping $\phi: \mathcal{P}(S) \rightarrow \mathcal{P}(S)$ is called finitary if for all $A \subseteq S$  $\phi(A) = \bigcup_{F \textnormal{ finite subset } A} \phi(F) $ \cite[Ch. VIII Sec. 4]{latticeB}.   

As we wish to look at all algebraic lattices and not just the power set lattices we will extend this definition for such closure operators.  

\begin{definition}\label{finitary} Let $L$ be an algebraic lattice.  An operator $\phi:L \rightarrow L$ is called finitary if for all $x \in L$:\\ 
F: $\displaystyle{ \phi(x) = \bigvee_{k \leq_c x} \phi(k)} $.  
\\ Let $a.c.o.(L) := \left\{ \phi \in c.o.(L) | \phi \textnormal{ is finitary}\right\}$ and be called set of algebraic closure operators.   
\end{definition}

\begin{remark}\label{geq_alg}
For any isotone operators $\displaystyle{ \phi(x) \geq \bigvee_{k \leq_c x} \phi(k)} $ is always true.  
\end{remark} 

\begin{theorem}\label{lattice_comp_L}\cite{top_B} Let $L$ be a complete lattice then $c.o.(L)$ is a complete lattice where $\phi_1 \leq \phi_2$ if and only if $\phi_1 (x) \leq_L \phi_2(x)$ for all $x \in L$ with the meets and the joins defined as follows: 
$$
\left( \bigwedge_{i \in I} \phi_i\right) (x) := \bigwedge_{i \in I} \left( \phi_i(x) \right) \ \ \forall \ x \in L  
$$
and 
$$
\left( \bigvee_{i \in I} \phi_i \right) (x) := \bigwedge \left\{ c \in L | c \geq x \textnormal{ and } \phi_i(c) = c \ \forall \ i \in I \right\}. 
$$
\end{theorem}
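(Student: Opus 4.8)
The plan is to check the stated formulas directly. One first notes that $c.o.(L)$, ordered by $\phi_1 \leq \phi_2$ exactly when $\phi_1(x) \leq_L \phi_2(x)$ for all $x \in L$, is a poset, since reflexivity, antisymmetry and transitivity are inherited pointwise from $L$. Because $L$ is complete, for any family $(\phi_i \mid i \in I)$ the two maps
$$
\mu(x) := \bigwedge_{i \in I}\phi_i(x) \qquad\text{and}\qquad \sigma(x) := \bigwedge\bigl\{\, c \in L \mid c \geq x \text{ and } \phi_i(c) = c \text{ for all } i \,\bigr\}
$$
are well-defined operators $L \to L$; the set defining $\sigma(x)$ is nonempty because $\top_L$ is fixed by every closure operator and lies above $x$. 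It then suffices to show that $\mu$ and $\sigma$ are closure operators, and that $\mu = \bigwedge_i \phi_i$ and $\sigma = \bigvee_i \phi_i$ in $c.o.(L)$; completeness is then immediate, since every family has both a meet and a join.

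For $\mu$, extensivity and isotonicity pass through the pointwise meet directly from C1 and C3 for each $\phi_i$, and idempotence follows from the chain $\mu(\mu(x)) \leq \phi_i(\mu(x)) \leq \phi_i(\phi_i(x)) = \phi_i(x)$ valid for every $i$ (using $\mu = \bigwedge_j\phi_j \leq \phi_i$ pointwise, then $\mu(x) \leq \phi_i(x)$ with isotonicity of $\phi_i$, then C2 for $\phi_i$), so $\mu(\mu(x)) \leq \bigwedge_i \phi_i(x) = \mu(x)$ while the reverse is C1. Clearly $\mu \leq \phi_i$ for all $i$, and any $\nu \leq \phi_i$ for all $i$ has $\nu(x) \leq \bigwedge_i \phi_i(x) = \mu(x)$; hence $\mu$ is the greatest lower bound.

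For $\sigma$, I would first isolate the structural fact that, for a single closure operator $\phi$, the set $\mathrm{Fix}(\phi)$ of $\phi$-closed elements is closed under arbitrary meets of $L$: if $a = \bigwedge A$ with every element of $A$ fixed by $\phi$, then $a \leq c$ gives $\phi(a) \leq \phi(c) = c$ for each $c \in A$, so $\phi(a) \leq a \leq \phi(a)$. Granting this, $\sigma$ is extensive because $x$ is a lower bound of its defining set, and isotone because enlarging $x$ shrinks that set; it is idempotent because $\sigma(x)$ itself belongs to the defining set of $x$ — it is $\geq x$, and for each $i$ it equals a meet of elements lying in $\mathrm{Fix}(\phi_i)$, hence lies in $\mathrm{Fix}(\phi_i)$ — so the defining set of $\sigma(x)$ contains $\sigma(x)$, whence $\sigma(\sigma(x)) \leq \sigma(x)$. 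Finally $\phi_i \leq \sigma$, since $x \leq \sigma(x)$ and $\sigma(x) \in \mathrm{Fix}(\phi_i)$ give $\phi_i(x) \leq \phi_i(\sigma(x)) = \sigma(x)$; and if $\tau \in c.o.(L)$ satisfies $\phi_i \leq \tau$ for all $i$, then $\tau(x) \geq x$ and, for every $i$, $\tau(x) \leq \phi_i(\tau(x)) \leq \tau(\tau(x)) = \tau(x)$ forces $\phi_i(\tau(x)) = \tau(x)$, so $\tau(x)$ lies in the defining set of $\sigma(x)$ and $\sigma(x) \leq \tau(x)$. Thus $\sigma = \bigvee_i \phi_i$, and $c.o.(L)$ is a complete lattice.

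The only step that goes beyond routine unwinding of C1--C3 is the observation that the fixed-point set of a closure operator is closed under arbitrary meets; this is the conceptual core, used precisely where one argues that $\sigma$ is idempotent and that $\sigma$ dominates each $\phi_i$. Everything else reduces to short pointwise manipulations.
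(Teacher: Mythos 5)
Your proof is correct. The paper does not actually prove this statement --- it is quoted from Birkhoff with a citation --- so there is no internal argument to compare against; what you have written is the standard proof, with the pointwise meet handled by direct verification of C1--C3 and the join handled via the key observation that the fixed-point set of a closure operator is closed under arbitrary meets (so that $\sigma(x)$ is itself a common fixed point, giving idempotence and the least-upper-bound property). All steps check out, including the nonemptiness of the defining set for $\sigma(x)$ via the top element.
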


At this point let us mention we will be moving between a general lattice $L$ and the lattice of closure operators $c.o.(L)$ without distinguishing in which lattice we are taking the meet or join, as in the definition above.  $\left( \bigvee_{i \in I} \phi_i \right) (x)$ would be taking the join in $c.o.(L)$ and then taking the closure of $x$ under the new operator.  Where $ \bigvee_{i \in I} \left( \phi_i (x) \right)$ would be a join in $L$ where the elements joined, $\phi_i (x)$, are the closures of $x$ under the different closure operators.  

\section{Sublattice}\label{sub} 

In Theorem~\ref{lattice_comp_L}, we see that set of all closure operators which act on a lattice form a complete lattice.  In this section we will consider a subset of that lattice, the set of all algebraic closure operators.  Please note in a finite lattice the set of closure operators would be the same as the set of algebraic closure operators.  Thus the more interesting case with when a lattice is infinite.  We restrict our infinite lattices to algebraic lattice so that we can more easily extend the definition for a finitary operator, or more specifically an algebraic (finitary) closure operator, Definition~\ref{finitary}.  

We will first consider what would happen if we take the meet of two algebraic closure operators.

\begin{proposition}\label{meetL}
Let $L$ be an algebraic lattice and let $\phi_1, \ \phi_2 \in a.c.o.(L)$. Then $\phi_1 \wedge \phi_2 \in a.c.o.(L)$. 
\end{proposition}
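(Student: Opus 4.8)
The plan is to lean on Theorem~\ref{lattice_comp_L} for the closure-operator part and then verify condition F directly. Since $c.o.(L)$ is a complete lattice in which meets are computed pointwise, the map $\psi := \phi_1 \wedge \phi_2$ given by $\psi(x) = \phi_1(x) \wedge \phi_2(x)$ is automatically a closure operator; in particular it is isotone, so Remark~\ref{geq_alg} already gives $\psi(x) \geq \bigvee_{k \leq_c x} \psi(k)$ for every $x \in L$. Hence the entire content of the proposition is the reverse inequality $\psi(x) \leq \bigvee_{k \leq_c x} \psi(k)$.

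To establish that inequality I would use the observation recorded in the preliminaries: in an algebraic lattice, $u \leq v$ follows once every compact $c \leq_c u$ satisfies $c \leq v$. So fix $x \in L$ and let $c \leq_c \psi(x) = \phi_1(x) \wedge \phi_2(x)$. Because $\phi_1$ is finitary, $c \leq \phi_1(x) = \bigvee_{k \leq_c x} \phi_1(k)$, and compactness of $c$ yields finitely many $k_1,\dots,k_n \leq_c x$ with $c \leq \phi_1(k_1) \vee \cdots \vee \phi_1(k_n)$. Since a finite join of compact elements is compact and $\phi_1$ is isotone, putting $a := k_1 \vee \cdots \vee k_n$ gives $a \leq_c x$ and $c \leq \phi_1(a)$. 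The same argument applied to $\phi_2$ produces a compact $b \leq_c x$ with $c \leq \phi_2(b)$.

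Now set $d := a \vee b$, which is again compact and below $x$. By isotonicity $c \leq \phi_1(a) \leq \phi_1(d)$ and $c \leq \phi_2(b) \leq \phi_2(d)$, so $c \leq \phi_1(d) \wedge \phi_2(d) = \psi(d) \leq \bigvee_{k \leq_c x} \psi(k)$. As $c$ ranges over all compact elements below $\psi(x)$, we conclude $\psi(x) \leq \bigvee_{k \leq_c x} \psi(k)$, which together with Remark~\ref{geq_alg} gives condition F and hence $\psi \in a.c.o.(L)$.

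The step I expect to be the crux is exactly the one where joins fail to distribute over the binary meet: expressing $\psi(x)$ as $\left(\bigvee_{k \leq_c x} \phi_1(k)\right) \wedge \left(\bigvee_{j \leq_c x} \phi_2(j)\right)$ does not telescope directly into $\bigvee_{k \leq_c x} \psi(k)$. What rescues the argument is compactness of the test element $c$ combined with closure of $C(L)$ under finite joins: these let us replace the two separate finite subjoins by a single common compact $d \leq_c x$ dominating both, after which isotonicity closes the gap. Everything else — that $\psi$ is a closure operator and that the pointwise meet is the meet in $c.o.(L)$ — is quoted from Theorem~\ref{lattice_comp_L}.
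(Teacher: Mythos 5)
Your proof is correct and follows essentially the same route as the paper: reduce to bounding each compact element below $\phi_1(x)\wedge\phi_2(x)$, use compactness to extract finite subjoins from the two finitary expansions, and merge them into a single compact $d\leq_c x$ (the paper's $a^*$) via isotonicity. The only cosmetic difference is that the paper writes $\psi(x)$ as the join of its compact elements rather than invoking the ``check on compacts'' criterion, which amounts to the same thing.
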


\begin{proof} 
Let $L$ be an algebraic lattice and let $\phi_1, \ \phi_2 \in a.c.o.(L)$.  Let $x \in L$ and let $y = (\phi_1 \wedge \phi_2)(x) = \phi_1(x) \wedge \phi_2(x) = \left(\bigvee_{k \leq_c x} \phi_1 (k)\right) \wedge \left(\bigvee_{l \leq_c x} \phi_2(l)\right)$. 
Since $L$ is algebraic,  $ y=\bigvee_{a \leq_c y} a$ and for all $a \leq_c y$, 
$ a \leq (\bigvee_{k \leq_c x} \phi_1 (k))$ and $a \leq (\bigvee_{l \leq_c x} \phi_2(l))$.    
By the compactness of $a$ these covers can be reduced to finite covers. Let $M_a$ and $N_a$ be such covers.  Then 
$$
a \leq \bigvee_{m\in M_a} \phi_1 (m), \hspace{.85in}   a \leq \bigvee_{n\in N_a} \phi_2(n).
$$ 
Letting $ a^* =\left( \bigvee_{j \in M_a \cup N_a} j \right)$ making $a^* \leq_c x$.  We now use this along with properties of closure operators to find 
$$
a \leq \phi_1(a^*) \wedge \phi_2(a^*) = (\phi_1 \wedge \phi_2)(a^*) \leq \bigvee_{k \leq_c x} (\phi_1 \wedge \phi_2)(k).
$$

We then see  
$$
y = \bigvee_{a \leq_c y} a  \leq  \bigvee_{k \leq_c x} (\phi_1 \wedge \phi_2)(k), \hspace{.35in} 
\left(\phi_1 \wedge \phi_2\right)(x) \leq \bigvee_{k \leq_c x } \left( \left(\phi_1 \wedge \phi_2\right (k) \right). 
$$  
With this and Remark~\ref{geq_alg} we have $\phi_1 \wedge \phi_2 (x) = \bigvee_{k \leq_c x } \left( \left(\phi_1 \wedge \phi_2\right) (k) \right)$ which by definition of algebraic closure operators makes $\phi_1 \wedge \phi_2 \in a.c.o.(L)$. 
\end{proof}

We could ask if we are also closed under arbitrary meets.  Below we have an example where the arbitrary meet of algebraic closure operators is not an algebraic closure operator.  

Let $L$ be an algebraic lattice with at least one non-compact element with is not the greatest element.  We will let 1 denote the greatest element of $L$.  Consider the following family closure operators which act on $L$.   Let $a\in L$. 
\begin{equation}\label{finite_phi}  
 \phi_a(x) := \left\{ \begin{array}{cl} \nonumber
 a, & \textrm{if } x \, \leq a\\ 
 1, & otherwise. \\
 
 \end{array}\right. 
\end{equation} 
This forms a family of algebraic closure operators. 
Now let us look at the arbitrary meet of $\phi_k$ for $k \in C(L)$.   
\begin{equation}\label{meet_finite_phi}
  \left( \bigwedge_{k \in C(L)}  \phi_k  \right) (x) = \left\{ \begin{array}{cl} \nonumber
 x, & \textrm{if } x \in C(L) \\ 
 1, & otherwise. \\
 
 \end{array}\right.
\end{equation} 

This meet is not algebraic. Thus,  $a.c.o.(L)$ is not closed under arbitrary meets.

We turn our attention to joins.  The next two lemmas come in useful when looking at the join of algebraic closure operators and the proofs for these lemmas are left to the reader.

\begin{lemma}\label{finite_c3}
The Property C3 of Definition~\ref{closure_lat} holds for finitary operators.  
\end{lemma}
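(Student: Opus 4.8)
The plan is to deduce C3 directly from the defining property F of a finitary operator. Fix $a,b \in L$ with $a \leq b$; I must show $\phi(a) \leq \phi(b)$. The observation that drives the argument is that the relation $\leq_c$ is transitive in the appropriate sense: if $k \in C(L)$ and $k \leq a$, then $k \leq b$ as well, so every compact element below $a$ is a compact element below $b$. Hence the index set $\{\,k \mid k \leq_c a\,\}$ over which the join defining $\phi(a)$ is formed is contained in the index set $\{\,k \mid k \leq_c b\,\}$ over which the join defining $\phi(b)$ is formed.

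Next I would use the elementary fact that, in a complete lattice, a join taken over a larger index set is at least the join taken over a smaller one: if $J \subseteq J'$ then $\bigvee_{j \in J} z_j \leq \bigvee_{j \in J'} z_j$. Since $L$ is algebraic it is complete, so this applies with $z_k = \phi(k)$, $J = \{\,k \mid k \leq_c a\,\}$, and $J' = \{\,k \mid k \leq_c b\,\}$, yielding $\bigvee_{k \leq_c a} \phi(k) \leq \bigvee_{k \leq_c b} \phi(k)$. Applying property F to each side rewrites this inequality as $\phi(a) \leq \phi(b)$, which is exactly C3.

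The argument is short and there is no serious obstacle; the only point that deserves explicit mention is that the ambient lattice is assumed algebraic, hence complete, so all the joins appearing in F are guaranteed to exist and the monotonicity-in-the-index-set step is legitimate.
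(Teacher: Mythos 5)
Your proof is correct: the paper leaves this lemma to the reader, and your argument — that $a \leq b$ gives $\{k \mid k \leq_c a\} \subseteq \{k \mid k \leq_c b\}$, hence $\phi(a) = \bigvee_{k \leq_c a}\phi(k) \leq \bigvee_{k \leq_c b}\phi(k) = \phi(b)$ by property F — is exactly the intended one. No gaps.
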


\begin{lemma}\label{a_lee_mu}
Let $L$ be an algebraic lattice and let $\phi:L \rightarrow L$ be a mapping with Property C3. Then $\phi$ is finitary if and only if 
for $x \in L$ and $a \leq_c \phi(x)$ implies $a \leq \phi(l)$ for some $l \leq_c x$.   
\end{lemma}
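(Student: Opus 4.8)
The plan is to prove the two implications of the biconditional separately. The converse rests on Remark~\ref{geq_alg} together with the observation recalled in Section~\ref{prelims} that, in an algebraic lattice, $u \leq v$ holds as soon as every compact element below $u$ lies below $v$; the forward direction uses instead the compactness of individual elements together with the fact --- already invoked in the proof of Proposition~\ref{meetL} --- that a finite join of compact elements is compact.

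For the forward direction, suppose $\phi$ is finitary, fix $x \in L$, and take $a \leq_c \phi(x)$. Since $\phi(x) = \bigvee_{k \leq_c x}\phi(k)$ and $a$ is compact, we get $a \leq \phi(k_1)\vee\cdots\vee\phi(k_n)$ for finitely many $k_1,\dots,k_n \leq_c x$. Put $l := k_1\vee\cdots\vee k_n$; then $l \leq x$ and $l$ is compact, so $l \leq_c x$, and since each $k_i \leq l$, Property~C3 gives $\phi(k_i) \leq \phi(l)$, whence $a \leq \phi(l)$. (When $n = 0$ we have $a = 0 \leq \phi(0)$ with $0 \leq_c x$, so the claim still holds.)

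For the converse, assume that for every $x \in L$ each $a \leq_c \phi(x)$ satisfies $a \leq \phi(l)$ for some $l \leq_c x$, and fix $x \in L$. By Remark~\ref{geq_alg} it suffices to show $\phi(x) \leq \bigvee_{k \leq_c x}\phi(k)$, and since $L$ is algebraic it is enough to verify $a \leq \bigvee_{k \leq_c x}\phi(k)$ for each $a \leq_c \phi(x)$. The hypothesis supplies $l \leq_c x$ with $a \leq \phi(l)$, and $\phi(l)$ is one of the joinands on the right-hand side, so $a \leq \phi(l) \leq \bigvee_{k \leq_c x}\phi(k)$; hence $\phi(x) = \bigvee_{k \leq_c x}\phi(k)$, i.e. $\phi$ is finitary. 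The argument involves no real obstacle --- it is a straightforward unwinding of the definitions; the one step meriting attention is in the forward direction, where the compactness of $a$ reduces the possibly infinite join $\bigvee_{k \leq_c x}\phi(k)$ to a finite subjoin and the isotonicity hypothesis C3 is exactly what lets us replace the finitely many witnesses by the single compact element $l \leq_c x$.
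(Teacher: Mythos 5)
Your proof is correct and complete; the paper explicitly leaves the proof of Lemma~\ref{a_lee_mu} to the reader, and your argument is exactly the standard one the author intends (reduce the join $\bigvee_{k \leq_c x}\phi(k)$ to a finite subjoin by compactness of $a$, collapse the finitely many witnesses into one compact $l$ via C3, and for the converse test against compact elements below $\phi(x)$ using Remark~\ref{geq_alg}). The edge case $n=0$ and the appeal to the fact that a finite join of compacts is compact are both handled appropriately and consistently with how the paper uses these facts elsewhere.
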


With the use of these lemmas we will build a way to find the arbitrary join of  algebraic closure operators.  We will start by looking at the composite of many finitary operators.

 \begin{corollary}\label{corollary_finitary2_L}
Let $\phi_i$ be finitary operators for $1 \leq i \leq n$ where $n \in \mathbb{N}$.  Then $\phi_n \phi_{n-1}...\phi_2\phi_1$ is finitary.   
\end{corollary}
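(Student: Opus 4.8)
The plan is to argue by induction on $n$, with the real content being the case $n=2$: the composite of two finitary operators is again finitary. Granting that, the general statement follows at once, since $\phi_n\phi_{n-1}\cdots\phi_1 = \phi_n\circ(\phi_{n-1}\cdots\phi_1)$, the operator $\phi_{n-1}\cdots\phi_1$ is finitary by the inductive hypothesis, and a single $\phi_i$ is finitary by assumption, so the two-operator case applies. The base case $n=1$ is vacuous.

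So I would fix finitary operators $\phi$ and $\psi$ and show $\phi\psi$ is finitary. First I would note that by Lemma~\ref{finite_c3} both $\phi$ and $\psi$ satisfy Property C3, whence so does the composite $\phi\psi$, being a composite of isotone maps. This makes Lemma~\ref{a_lee_mu} applicable to $\phi\psi$, reducing the goal to the following: for every $x\in L$ and every $a\leq_c\phi\psi(x)$, there exists $l\leq_c x$ with $a\leq\phi\psi(l)$.

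To establish this I would do a two-step compactness chase. Fix $x$ and $a\leq_c\phi(\psi(x))$. Applying Lemma~\ref{a_lee_mu} to the finitary operator $\phi$ with input $\psi(x)$ yields a compact $b\leq_c\psi(x)$ with $a\leq\phi(b)$. Applying Lemma~\ref{a_lee_mu} again, now to the finitary operator $\psi$ with input $x$, yields a compact $l\leq_c x$ with $b\leq\psi(l)$. Since $\phi$ satisfies C3, $b\leq\psi(l)$ gives $\phi(b)\leq\phi(\psi(l))=\phi\psi(l)$, and hence $a\leq\phi(b)\leq\phi\psi(l)$. By Lemma~\ref{a_lee_mu}, $\phi\psi$ is finitary, closing the induction.

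I do not expect a genuine obstacle here; the proof is essentially bookkeeping. The only points that need a little care are checking that the composite inherits Property C3 (so that Lemma~\ref{a_lee_mu} is legitimately available) and keeping the order of composition straight when peeling off the outermost operator in the inductive step.
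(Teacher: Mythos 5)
Your proof is correct and follows essentially the same route as the paper: establish C3 for the composite, then do the two-step compactness chase via Lemma~\ref{a_lee_mu} (first peeling off the outer operator, then the inner one), and finish by induction. If anything, your version is slightly more careful than the paper's in attributing each application of Lemma~\ref{a_lee_mu} to the correct operator.
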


\begin{proof} Let $\phi_i$ be finitary operators for $1 \leq i \leq n$ where $n \in \mathbb{N}$. If $y \leq_L z$ then $\phi _2\phi_1(y) \leq_L \phi _2\phi_1(z)$ by reiterating Property C3.  This would make 
$ \bigvee_{k \leq_c x} (\phi_2\phi _1(k)) \leq_L \phi _2\phi_1(x)$. 
Take an $x\in L$. Let $\displaystyle a \leq_c \phi_2\phi_1(x) $.  
From Lemma~\ref{a_lee_mu} and the fact that $\phi_1$ is finitary closure operator, we have that \\
$ a~\leq~\phi_2~(l)$ for some $l \leq_c \phi_1(x)$.  We do the same for $l$ and find $l \leq \phi_1 (m)$ where
$m \leq_c x$.  We then have $ a \leq \phi_2(l) \leq \phi_2\phi_1(m) \leq \bigvee_{k \leq_c x} \phi_2\phi_1 (k). $. Thus, $\phi_2\phi_1$ is finitary by Lemma~\ref{a_lee_mu}. 

Using induction, assuming that $\phi _{n-1}\phi _{n-2}...\phi_1$
is finitary then we have that $\phi _n\phi _{n-1}...\phi_1$ is finitary.
\end{proof}

\begin{lemma}\label{finitary_arb_mu_L}
Let $L$ be an algebraic lattice. For a family $(\phi_i | i \in I)$ where $\phi_i \in a.c.o.(L)$, let
$$
\mu(x) := \bigvee_{j \in I^k,  k \geq 0} \phi_{j_1} \phi_{j_2} ...\phi_{j_k} (x). 
$$
Then \\
(1) for $x\in L$ and $a \in C(L)$, $a \leq \mu(x)$ if and only if $a \leq \phi_{j_1} \phi_{j_2} ...\phi_{j_k} (x)$ for some $k \geq 0$ and $j \in I^k$;  \\
(2) $\mu$ is finitary.
\end{lemma}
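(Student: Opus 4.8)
The plan is to prove the two claims together, with (1) doing the real work and (2) following almost immediately. For (1), the ``if'' direction is trivial: if $a \leq \phi_{j_1}\cdots\phi_{j_k}(x)$ for some word $j$, then $a \leq \mu(x)$ since $\mu(x)$ is a join that includes this term. The substance is the ``only if'' direction. Here I would use compactness of $a$: since $a \leq \mu(x) = \bigvee_{j,k} \phi_{j_1}\cdots\phi_{j_k}(x)$, there is a finite set of words $j^{(1)}, \dots, j^{(r)}$ (of various lengths) with $a \leq \phi_{j^{(1)}_1}\cdots\phi_{j^{(1)}_{k_1}}(x) \vee \cdots \vee \phi_{j^{(r)}_1}\cdots\phi_{j^{(r)}_{k_r}}(x)$. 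The goal is to collapse this finite join of iterated compositions into a single iterated composition that still dominates $a$.

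The key observation driving the collapse is that composition of these operators is monotone in a strong sense: by Corollary~\ref{corollary_finitary2_L} each finite composite is itself a (finitary) closure operator, so it is extensive and isotone, and therefore prepending or appending more operators only increases the value. Concretely, for any two words $u$ and $v$, both $\phi_{u}(\cdots)$ and $\phi_{v}(\cdots)$ applied to $x$ are bounded above by $\phi_{v}\phi_{u}(x)$ (apply extensiveness of $\phi_v$-composite to $\phi_u(\cdots)(x)$, and isotonicity of $\phi_u$-composite to $x \leq \phi_v(\cdots)(x)$ — wait, more carefully: $\phi_u(x) \leq \phi_v$-composite of $\phi_u(x)$ by extensiveness, giving $\phi_u(x) \leq \phi_v\phi_u(x)$; and $\phi_v(x) \leq \phi_v(\phi_u(x))$ is not immediate, but $x \leq \phi_u(x)$ gives $\phi_v(x) \leq \phi_v\phi_u(x)$ by C3 of the composite). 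Iterating this, the concatenation $j^{(1)} j^{(2)} \cdots j^{(r)}$ of all the finitely many words yields a single word $j^* \in I^k$ with $k = \sum k_i \geq 0$ such that $\phi_{j^{(i)}_1}\cdots\phi_{j^{(i)}_{k_i}}(x) \leq \phi_{j^*_1}\cdots\phi_{j^*_k}(x)$ for every $i$. Hence the finite join is $\leq \phi_{j^*_1}\cdots\phi_{j^*_k}(x)$, so $a \leq \phi_{j^*_1}\cdots\phi_{j^*_k}(x)$, proving (1). (The case where the finite subcover is empty forces $a = 0_L$, handled by taking $k = 0$.)

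For (2), I would verify the criterion of Lemma~\ref{a_lee_mu}. First note $\mu$ has Property C3: if $x \leq y$, then each $\phi_{j_1}\cdots\phi_{j_k}(x) \leq \phi_{j_1}\cdots\phi_{j_k}(y)$ by iterating C3, so the joins compare. Now take $a \leq_c \mu(x)$; by part (1), $a \leq \phi_{j_1}\cdots\phi_{j_k}(x)$ for some word $j$ of length $k$. Since $\phi_{j_1}\cdots\phi_{j_k}$ is finitary by Corollary~\ref{corollary_finitary2_L}, Lemma~\ref{a_lee_mu} gives $l \leq_c x$ with $a \leq \phi_{j_1}\cdots\phi_{j_k}(l) \leq \mu(l)$ (the last inequality again because $\mu(l)$ is a join containing this term). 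Thus $a \leq \mu(l)$ for some $l \leq_c x$, and by Lemma~\ref{a_lee_mu} again, $\mu$ is finitary.

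The main obstacle is the concatenation-collapse step in part (1): one must be careful that prepending an arbitrary word to another does not decrease the value, which relies precisely on each finite composite being a closure operator (extensive and isotone) — this is exactly what Corollary~\ref{corollary_finitary2_L} and the closure axioms supply, so once that is invoked cleanly the argument goes through.
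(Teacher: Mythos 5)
Your proof is correct and follows essentially the same route as the paper: compactness of $a$ reduces the join to a finite subjoin, a single composite word dominates it, and then Corollary~\ref{corollary_finitary2_L} together with Lemma~\ref{a_lee_mu} yields finitariness of $\mu$. In fact your concatenation argument (using extensiveness and isotonicity of the composites to collapse a finite join of words into one word) explicitly justifies a step the paper's proof passes over with only the remark that the finite subjoin ``is a finite set,'' so your write-up is, if anything, more complete on that point.
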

\begin{proof}

Let $x \in L$ and $a \in C(L)$. For $a \leq \phi_{j_1} \phi_{j_2} ...\phi_{j_k} (x)$ by definition of joins this would make $a \leq \mu (x) $.   For $a \leq_c \mu(x) = \bigvee_{j \in I^k,  k \geq 0} \phi_{j_1} \phi_{j_2} ...\phi_{j_k} (x)$. Because $a$ is compact, we have that $a$ is less then the join of a finite subset of $\left\{  \phi_{j_1} \phi_{j_2} ...\phi_{j_k} (x) | j \in I^k  and k \geq 0 \right\}$.  This is a finite set thus there is a $\phi_{i_1} \phi_{i_2} ...\phi_{i_k} (x)$ for some $i \in I^k$, $k\geq0$ where $a \leq \phi_{i_1} \phi_{i_2} ...\phi_{i_k} (x)$.

We know from Corollary \ref{corollary_finitary2_L} that $\phi_{j_1} \phi_{j_2} ...\phi_{j_k}$ is finitary for any $j \in I^k$ and $k \geq 0$.   Let $x \in L$ and $a \in C(L)$ with $a\leq \mu(x)$. 
Since $\phi_{j_1} \phi_{j_2} ...\phi_{j_k}$ is finitary there exists $l \leq_c x$  such that $a \leq \phi_{j_1} \phi_{j_2} ...\phi_{j_k}(l) \leq \mu (l)$.
  Since $l$ is compact,
$ a \leq \bigvee_{d \leq_c x} \mu(d)$.   We can do this for any $a \leq_c \mu(x)$. From this we have
$$
\mu(x) = \left( \bigvee_{a \leq_c \mu(x)}a \right) \leq \bigvee_{d \leq_c x}\mu(d).
$$
Since $\mu$ is isotone we have the other inclusion from Remark~\ref{geq_alg}. 
Thus we have that $\mu$ is finitary.    
\end{proof}

\begin{proposition}\label{join_arb_alg_L}
Let $L$ be an algebraic lattice. For a family $( \phi_i | i\in I)$ where $\phi_i \in a.c.o.(L)$, the join in $c.o.(L)$ is 
$$
\left( \bigvee_{i \in I}  \phi_i \right) (x)=\bigvee_{j \in I^k,  k \geq 0} \phi_{j_1} \phi_{j_2} ...\phi_{j_k} (x)  
$$
and therefore it is also the join in $a.c.o.(L)$.  
\end{proposition}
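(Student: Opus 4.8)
The plan is to take $\mu$ to be exactly the operator defined in Lemma~\ref{finitary_arb_mu_L}, show that $\mu \in a.c.o.(L)$, and then verify that $\mu$ satisfies the universal property of the join of $(\phi_i \mid i \in I)$ in $c.o.(L)$; since $L$ is complete, Theorem~\ref{lattice_comp_L} guarantees that join exists, so identifying $\mu$ as the least upper bound suffices.

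First I would check that $\mu$ is a closure operator. Extensivity (C1) is immediate, since the term indexed by the empty sequence ($k=0$) is the identity map, so $x \leq \mu(x)$. Isotonicity (C3) follows because every composite $\phi_{j_1}\phi_{j_2}\cdots\phi_{j_k}$ is isotone by Lemma~\ref{finite_c3}, hence so is a join of such maps. Idempotence (C2) is the step that actually uses the finitary machinery: given $a \leq_c (\phi_{j_1}\cdots\phi_{j_k})(\mu(x))$, apply Corollary~\ref{corollary_finitary2_L} and Lemma~\ref{a_lee_mu} to get $l \leq_c \mu(x)$ with $a \leq (\phi_{j_1}\cdots\phi_{j_k})(l)$; then Lemma~\ref{finitary_arb_mu_L}(1) gives $l \leq \phi_{i_1}\cdots\phi_{i_m}(x)$ for some $i \in I^m$, $m \geq 0$, whence $a \leq \phi_{j_1}\cdots\phi_{j_k}\phi_{i_1}\cdots\phi_{i_m}(x) \leq \mu(x)$ because the concatenated index sequence produces another term of the join defining $\mu(x)$. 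As this holds for every compact $a$ below $(\phi_{j_1}\cdots\phi_{j_k})(\mu(x))$ and $L$ is algebraic, we get $(\phi_{j_1}\cdots\phi_{j_k})(\mu(x)) \leq \mu(x)$; joining over all $j$ and $k \geq 0$ gives $\mu(\mu(x)) \leq \mu(x)$, and the reverse inequality is C1. Since Lemma~\ref{finitary_arb_mu_L}(2) already gives that $\mu$ is finitary, this shows $\mu \in a.c.o.(L)$.

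Next I would verify the universal property in $c.o.(L)$. That $\mu$ is an upper bound is clear: for each $i$, taking $k=1$ and $j_1 = i$ shows $\phi_i(x) \leq \mu(x)$. For minimality, let $\psi \in c.o.(L)$ with $\phi_i \leq \psi$ for all $i$; I would prove by induction on $k$ that $\phi_{j_1}\cdots\phi_{j_k}(x) \leq \psi(x)$ for all $x$. The case $k=0$ is C1 for $\psi$, and in the inductive step $\phi_{j_1}\big(\phi_{j_2}\cdots\phi_{j_k}(x)\big) \leq \phi_{j_1}(\psi(x)) \leq \psi(\psi(x)) = \psi(x)$, using C3 for $\phi_{j_1}$, then $\phi_{j_1} \leq \psi$, then idempotence of $\psi$. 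Taking the join over all terms yields $\mu(x) \leq \psi(x)$, so $\mu \leq \psi$. Hence $\mu = \bigvee_{i \in I}\phi_i$ in $c.o.(L)$. Finally, since $\mu \in a.c.o.(L)$ and every upper bound of the family lying in $a.c.o.(L)$ is in particular an upper bound in $c.o.(L)$ and therefore $\geq \mu$, the operator $\mu$ is also the least upper bound in $a.c.o.(L)$, so the two joins coincide.

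The main obstacle is the idempotence verification for $\mu$: extensivity, isotonicity, the upper-bound claim, and the least-upper-bound induction are each a single line, whereas C2 requires combining the finitary reduction of an arbitrary long composite (Corollary~\ref{corollary_finitary2_L}, Lemma~\ref{a_lee_mu}), the description of compact elements below $\mu(x)$ from Lemma~\ref{finitary_arb_mu_L}(1), and the observation that the set of admissible index sequences is closed under concatenation. Everything else is routine.
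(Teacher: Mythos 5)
Your proposal is correct and follows essentially the same route as the paper: define $\mu$ as in Lemma~\ref{finitary_arb_mu_L}, use that lemma together with Corollary~\ref{corollary_finitary2_L} and Lemma~\ref{a_lee_mu} to verify C1, C3, and (the only nontrivial step) C2 via the concatenation of index sequences, and then identify $\mu$ with the join in $c.o.(L)$. Your explicit induction showing $\phi_{j_1}\cdots\phi_{j_k}(x)\leq\psi(x)$ for any upper bound $\psi$ just makes precise the inequality $\mu(x)\leq\left(\bigvee_{i\in I}\phi_i\right)(x)$ that the paper asserts at the outset, so the two arguments coincide in substance.
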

\begin{proof}
The right hand side is what we have called $\mu(x)$; let us continue to do so.  Since $\phi_{i} \leq \left( \bigvee_{i \in I}  \phi_i \right)$, we have $\mu(x) \leq \left( \bigvee_{i \in I} \phi_i\right)(x)$ for all $x \in L$.  
We will assume for the rest of the proof that we are choosing $x, y \in L$.  
\\
C1: $x \leq \phi_i (x) \leq \mu(x)$ for any $i \in I$. Thus, $\mu$ has the extensive property for closure operators.   
\\
C3: We have already shown that $\mu$ is finitary and from Lemma~\ref{finite_c3}, Property C3 holds for $\mu$. 
\\
C2: Having property C1 for $\mu$ we know $\mu(x) \leq \mu(\mu(x))$. Let $a \leq_c \mu(\mu(x))$, then by Lemma~\ref{a_lee_mu}
there exists $\displaystyle l \leq_c \mu(x)$ such that $\displaystyle a \leq \mu(l)$. Now we know $l$ compact, so by Lemma~\ref{finitary_arb_mu_L}, $l$ is less then some finite sub-cover, which in this case makes $l \leq \phi_{j_1} \phi_{j_2} ...\phi_{j_k}(x)$ for some $j \in I$ and $k \geq 0$.  This in turn gives us 
$$
a \leq \mu(l)\leq \mu \left( \phi_{j_1} \phi_{j_2} ...\phi_{j_k} (x) \right) = \mu(x). 
$$
Since for any compact elements where $a \leq_c \mu\mu(x)$ we have $a \leq_c \mu(x)$, thus $\mu\mu(x) \leq \mu(x)$.  Thus Property C3 holds for $\mu$.  

Having all three properties of a closure operator $\mu \in a.c.o.(L)$.  We have for each $i \in I$, $\phi_i \leq \mu$ and $\mu (x) \leq \left( \bigvee_{i \in I}  \phi_i \right)(x)$,  making \\ 
$ \mu =  \left( \bigvee_{i \in I}  \phi_i \right) $.  
\end{proof}

\begin{proposition}
 Let $L$ be an algebraic lattice. Then $a.c.o.(L)$ is a sublattice of $c.o.(L)$.  
\end{proposition}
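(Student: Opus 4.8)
The plan is to deduce this immediately from Propositions~\ref{meetL} and~\ref{join_arb_alg_L}. Recall that a sublattice is a nonempty subset closed under the binary meet and join \emph{of the ambient lattice}; here the ambient lattice is $c.o.(L)$, with $\wedge$ and $\vee$ as described in Theorem~\ref{lattice_comp_L}. So three things must be checked: that $a.c.o.(L)$ is nonempty, that it is closed under $\wedge$, and that it is closed under $\vee$.

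First I would observe that $a.c.o.(L)$ is nonempty: the identity map $\mathrm{id}_L$ is a closure operator, and it is finitary because in an algebraic lattice $x = \bigvee_{k \leq_c x} k = \bigvee_{k \leq_c x} \mathrm{id}_L(k)$ for every $x$. Hence $\emptyset \neq a.c.o.(L) \subseteq c.o.(L)$. Next, closure under binary meet is exactly Proposition~\ref{meetL}: if $\phi_1, \phi_2 \in a.c.o.(L)$ then $\phi_1 \wedge \phi_2$, computed in $c.o.(L)$, again lies in $a.c.o.(L)$. Finally, for closure under binary join I would apply Proposition~\ref{join_arb_alg_L} to the two-element family $(\phi_1, \phi_2)$: that proposition says the join formed in $c.o.(L)$ equals the operator $\mu$ and that $\mu \in a.c.o.(L)$, so $\phi_1 \vee \phi_2 \in a.c.o.(L)$. (In fact Proposition~\ref{join_arb_alg_L} already gives closure under arbitrary joins, so only the meet side restricts us to the finite case.)

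Putting these together, $a.c.o.(L)$ is a nonempty subset of $c.o.(L)$ closed under the binary operations $\wedge$ and $\vee$ of $c.o.(L)$, which is the definition of a sublattice. I do not expect a genuine obstacle here: the one point requiring a little care is the distinction between $a.c.o.(L)$ being a sublattice of $c.o.(L)$ and its merely being a lattice in its own right, but both cited propositions are phrased in terms of the operations inherited from $c.o.(L)$, so that distinction causes no trouble. One could also add, by way of contrast, that the earlier counterexample with the family $(\phi_k \mid k \in C(L))$ shows $a.c.o.(L)$ is \emph{not} a complete sublattice of $c.o.(L)$, since arbitrary meets need not be preserved.
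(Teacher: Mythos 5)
Your proposal is correct and follows exactly the route the paper intends: the paper gives no separate proof of this proposition, treating it as an immediate consequence of Proposition~\ref{meetL} (closure under binary meets) and Proposition~\ref{join_arb_alg_L} (the join computed in $c.o.(L)$ already lies in $a.c.o.(L)$). Your added remarks on nonemptiness via the identity operator and on the failure of closure under arbitrary meets are accurate and consistent with the paper's surrounding discussion.
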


We see the set $a.c.o.(L)$ is a lattice.  We see from Proposition~\ref{join_arb_alg_L} we have arbitrary joins. We shall see from the following proposition that although it is not a complete sublattice of $c.o.(L)$, $a.c.o.(L)$ is a complete lattice.

\begin{proposition} \label{def_arb_meet_L}
Let $L$ be an algebraic lattice. For a family $(\phi_i | i \in I)$, where $\phi _i \in a.c.o.(L)$ and $x \in L$, 
$$ 
\left( \bigwedge_{i\in I}^a \phi _i\right) (x) = \bigvee _{k \leq_c x}^L \left( \bigwedge _{i \in I} \phi _i (k) \right).  
$$
\end{proposition}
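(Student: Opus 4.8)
The plan is to show directly that the operator $\psi$ given by the right-hand side, namely $\psi(x):=\bigvee_{k\leq_c x}\bigl(\bigwedge_{i\in I}\phi_i(k)\bigr)$, lies in $a.c.o.(L)$ and is the greatest lower bound of the family $(\phi_i\mid i\in I)$ inside $a.c.o.(L)$. Proving it is the greatest lower bound (rather than merely checking it agrees with a meet assumed to exist) has the bonus of establishing along the way that $a.c.o.(L)$ is a complete lattice, which is what the paragraph preceding the proposition promises. Conceptually $\psi$ is the finitary ``reduct'' of the $c.o.(L)$-meet $x\mapsto\bigwedge_i\phi_i(x)$, so the content is that this reduct is again a closure operator and is the largest algebraic one below all the $\phi_i$.

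First I would dispose of the easy axioms for $\psi$. Extensivity (C1) holds because $L$ is algebraic, so $x=\bigvee_{k\leq_c x}k$, and each compact $k$ satisfies $k\leq\bigwedge_i\phi_i(k)$ by C1 for the individual $\phi_i$. Isotonicity (C3) is immediate: if $x\leq y$ then $\{k:k\leq_c x\}\subseteq\{k:k\leq_c y\}$, so $\psi(x)\leq\psi(y)$. Finitariness splits into two inequalities: $\bigvee_{k\leq_c x}\psi(k)\leq\psi(x)$ is Remark~\ref{geq_alg} applied to the isotone map $\psi$, while the reverse holds because for a compact $k$ the term with index $k$ in the join defining $\psi(k)$ shows $\bigwedge_i\phi_i(k)\leq\psi(k)$, whence $\psi(x)=\bigvee_{k\leq_c x}\bigwedge_i\phi_i(k)\leq\bigvee_{k\leq_c x}\psi(k)$.

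The one substantive point is idempotency (C2) of $\psi$, and I would get it cheaply from Corollary~\ref{corollary_finitary2_L} rather than re-running the finite-reduction argument of Proposition~\ref{meetL}. Since $\psi$ is finitary, so is $\psi\circ\psi$. Also $\psi\leq\phi_j$ pointwise for every $j$, because $\psi(x)=\bigvee_{k\leq_c x}\bigwedge_i\phi_i(k)\leq\bigvee_{k\leq_c x}\phi_j(k)=\phi_j(x)$ using that $\phi_j$ is finitary. Hence for every $x$, $\psi\psi(x)\leq\psi(\phi_j(x))\leq\phi_j(\phi_j(x))=\phi_j(x)$, using isotonicity of $\psi$, then $\psi\leq\phi_j$, then C2 for $\phi_j$; so for each compact $k$ we get $\psi\psi(k)\leq\bigwedge_j\phi_j(k)$. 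Since $\psi\psi$ is finitary, $\psi\psi(x)=\bigvee_{k\leq_c x}\psi\psi(k)\leq\bigvee_{k\leq_c x}\bigwedge_j\phi_j(k)=\psi(x)$, and with C1 this gives $\psi\psi=\psi$. Thus $\psi\in a.c.o.(L)$.

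Finally I would verify that $\psi$ is the meet in $a.c.o.(L)$. It is a lower bound of the family by the pointwise inequality $\psi\leq\phi_j$ already noted (together with $\psi\in a.c.o.(L)$). If $\theta\in a.c.o.(L)$ satisfies $\theta\leq\phi_i$ for all $i$, then $\theta(k)\leq\bigwedge_i\phi_i(k)$ for every compact $k$, and since $\theta$ is finitary, $\theta(x)=\bigvee_{k\leq_c x}\theta(k)\leq\bigvee_{k\leq_c x}\bigwedge_i\phi_i(k)=\psi(x)$; hence $\theta\leq\psi$. Therefore $\psi=\bigwedge^a_{i\in I}\phi_i$. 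The only place any care is needed is the ordering of the argument around C2 — having finitariness of $\psi$ in hand before invoking Corollary~\ref{corollary_finitary2_L} — and everything else is routine manipulation of joins of compact elements.
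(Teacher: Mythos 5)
Your proof is correct, and its overall shape matches the paper's: define the right-hand side as an operator, check finitariness, C1, C3, and C2, then finish by showing it is the greatest lower bound of the family inside $a.c.o.(L)$ (your lower-bound and greatest-lower-bound steps are essentially the paper's verbatim). The one genuine divergence is idempotency. The paper proves C2 by applying Lemma~\ref{a_lee_mu} twice: a compact $k \leq_c \tau\tau(x)$ is traced back through some compact $a_k \leq_c \tau(x)$ to some $d_k \leq_c x$ with $a_k \leq \bigwedge_{i}\phi_i(d_k)$, after which C2 and C3 of the individual $\phi_i$ give $k \leq \tau(d_k) \leq \tau(x)$. You instead invoke Corollary~\ref{corollary_finitary2_L} to get that $\psi\psi$ is finitary, bound $\psi\psi(x) \leq \psi(\phi_j(x)) \leq \phi_j(\phi_j(x)) = \phi_j(x)$ for every $j$, and then promote the resulting inequality $\psi\psi(k) \leq \bigwedge_j \phi_j(k)$ on compacts to all of $L$ using finitariness of $\psi\psi$. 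Both arguments are sound; yours trades the double compactness-tracing for one application of the composition corollary and is arguably cleaner, at the cost of needing C3 and finitariness of $\psi$ strictly before C2 (a dependency you correctly flag). Two minor points in your favor: your direct verification of C3 from monotonicity of the index set $\{k \mid k \leq_c x\}$ avoids the detour through Lemma~\ref{finite_c3}, and your finitariness step correctly needs only $\bigwedge_i \phi_i(k) \leq \psi(k)$ rather than the exact identity $\tau(k) = \bigwedge_i \phi_i(k)$ that the paper asserts.
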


\begin{proof}
For ease of notation let $\tau (x) = \bigvee _{k \leq_c x} \left( \bigwedge _{i \in I} \phi _i (k) \right)$ for all $x \in L$.  We will show $\tau$ is an algebraic closure operator, that is, that properties F1, C1, C2, and C3 hold for $\tau$.   \\
F1: Consider what $\tau(k)$ would be if $k \in C(L)$: 
$$
\tau(k) = \bigvee _{l \leq_c k}^L \left( \bigwedge_{i \in I} \phi _i (l)\right) = \bigwedge_{i \in I} \phi _i (k)  \hspace{.15in} \textnormal{ because } k \leq_c k, 
$$  
Let $x \in L$ then  $\tau(x) = \bigvee _{k \leq_c x} \left( \bigwedge_{i \in I} \phi _i (k) \right) = \bigvee _{k \leq_c x} \left( \tau(k) \right),$ thus $\tau$ is finitary.  \\ 
C1: Let $x \in L$. Since $\tau$ is finitary and $L$ is algebraic for $k \leq_c x$ then \\
$ k\leq \bigwedge_{i\in I} \phi_i (k) = \tau(k) \leq \tau(x)$. Thus we have $x = \bigvee_{k \leq_c x} k  \leq \tau(x)$.\\  
C3: Since we already know $\tau$ is finitary we get Property C3 by Lemma~\ref{finite_c3}. 
\\
C2: Let $k \leq_c \tau(\tau(x))$.  By Lemma~\ref{a_lee_mu} there is an $a_k \leq_c \tau(x)$ such that $\displaystyle k \leq \tau(a_k)$.  
Similarly, there is a $d_k \leq_c x $ such that $ a_k \leq \tau(d_k) = \bigwedge_{i\in I} \phi_i(d_k)$.  
Which is to say that $a_k \leq \phi_i(d_k)$ for all $i \in I$.   
We once again employ Property~C3 on the $\phi_i$'s and also Property~C2 to find $\phi_i(a_k) \leq \phi_i(\phi_i(d_k)) = \phi_i(d_k)$ for all $i\in I$. For any $k \leq_c \tau\tau (x)$. 
$$
k \leq \tau(a_k) = \bigwedge_{i\in I} \phi _i (a_k) \leq \bigwedge_{i\in I} \phi_i (d_k) = \tau(d_k) \leq \tau (x). 
$$
Thus, $\tau \tau (x)  \leq \tau(x)$ making $\tau\tau(x) = \tau(x)$.
 
Now to show that $\tau$ is the greatest lower bound for the $\phi_i$'s, let $\rho \in a.c.o.(L)$ such that $\rho$ is a lower bound for each of the $\phi_i$'s. 
Then $\rho (x) \leq \phi _i(x)$ for all $i\in I$. This would also make $ \rho ( a) \leq \phi _i(a)$ for all $i \in I$ and $a \leq_c x$. $ \rho(a) \leq \bigwedge_{i\in I} \phi_i(a) = \tau (a)$. We know that $\rho$ is finitary hence 
$$
\rho(x)= \bigvee_{a \leq_c x} \rho(a) \leq \bigvee_{a \leq_c x} \tau (a) = \tau(x). 
$$
This makes $\tau$ the greatest lower bound.
\end{proof}

\begin{theorem}
Let $L$ be an algebraic lattice. Then $a.c.o.(L)$ is a complete lattice. For a family $(\phi_i| i\in I)$ with $\phi_i \in a.c.o.(L)$ and $x \in L$,  
$$
\left( \bigwedge_{i\in I} \phi _i\right) (x) = \bigvee _{k \leq_c  x} \left( \bigwedge _{i \in I} \phi _i (k) \right) \hspace{.05in} \textnormal{and} \hspace{.05in}  \left( \bigvee_{i \in I}\phi_i \right) (x) = \bigvee_{j \in I^k,  k \geq 0} \phi_{j_1} \phi_{j_2} ...\phi_{j_k} (x). 
$$  
\end{theorem}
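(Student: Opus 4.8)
The plan is to assemble the statement from Propositions~\ref{join_arb_alg_L} and~\ref{def_arb_meet_L}, since the substantive work has already been carried out. To establish that $a.c.o.(L)$ is a complete lattice it suffices to show that every family $(\phi_i \mid i \in I)$ of elements of $a.c.o.(L)$ has both a supremum and an infimum in $a.c.o.(L)$; the empty family is handled directly, since then the identity map and the constant map with value $1$ serve as the bottom and top of $a.c.o.(L)$, and both are readily seen to be finitary (the identity because $L$ is algebraic, so $x = \bigvee_{k \leq_c x} k$, and the constant map trivially).

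First I would invoke Proposition~\ref{join_arb_alg_L}: it produces the operator $\mu(x) = \bigvee_{j \in I^k,\, k \geq 0}\phi_{j_1}\phi_{j_2}\cdots\phi_{j_k}(x)$, shows $\mu \in a.c.o.(L)$, and shows $\mu$ is the join of the family in the ambient lattice $c.o.(L)$. The one logical point to record is that a least upper bound computed in the larger poset $c.o.(L)$, once it is known to lie in the sub-poset $a.c.o.(L)$, is automatically the least upper bound within $a.c.o.(L)$: any upper bound of the family inside $a.c.o.(L)$ is also an upper bound inside $c.o.(L)$ and hence dominates $\mu$. So $\bigvee_{i \in I}\phi_i$ exists in $a.c.o.(L)$ and equals the displayed composite. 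For the meets I would invoke Proposition~\ref{def_arb_meet_L}, which does more than name a candidate: it verifies that $\tau(x) = \bigvee_{k \leq_c x}\bigl(\bigwedge_{i \in I}\phi_i(k)\bigr)$ is finitary, is a closure operator, and is the \emph{greatest} lower bound of the family inside $a.c.o.(L)$. Here one cannot take the shortcut used for joins: the family $\phi_a$ displayed after Proposition~\ref{meetL} shows that the meet of algebraic closure operators taken in $c.o.(L)$ need not be finitary, and the formula of Proposition~\ref{def_arb_meet_L} is precisely the correction needed to land back in $a.c.o.(L)$.

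Combining the two propositions, every family in $a.c.o.(L)$ has both a meet and a join in $a.c.o.(L)$, so $a.c.o.(L)$ is a complete lattice, and the two displayed formulas are verbatim the conclusions of Propositions~\ref{def_arb_meet_L} and~\ref{join_arb_alg_L} (with the superscripts indicating in which lattice the operations are being taken now suppressed, as announced in Section~\ref{prelims}). I expect no genuine obstacle at this stage: the theorem is a corollary of results already in hand, and the only thing requiring care is the bookkeeping distinction between operations taken in $c.o.(L)$ and in $a.c.o.(L)$ — the join transfers freely because the $c.o.(L)$-join happens to be finitary, whereas the meet must be recomputed via the formula of Proposition~\ref{def_arb_meet_L} because the $c.o.(L)$-meet generally is not.
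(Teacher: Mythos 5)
Your proposal is correct and follows the same route as the paper, which gives no separate argument for this theorem and simply lets it stand as the combination of Propositions~\ref{join_arb_alg_L} and~\ref{def_arb_meet_L}. Your added remarks — the empty family, the transfer of least upper bounds from $c.o.(L)$ to the sub-poset, and the reason the meet cannot be taken in $c.o.(L)$ — are accurate and, if anything, make the deduction more explicit than the paper does.
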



\section{Properties of the lattice of algebraic closure operators}\label{proposition_acoL}

In this section we will look at properties of the lattice $a.c.o.(L)$, find a set of elements that are compact and show that the elements of this set build the whole lattice.  To do this we will use the make use of the following from Birkhoff \cite[Ch. VIII Sec. 4 $\&$ 5] {latticeB}.  

For a directed set, a poset such that any two elements have an upper bound in the set, and a compact element $k$ in an algebraic lattice,  $k \leq \bigvee D$ if and only if $k \leq d$ for some $d \in D$.      

A complete lattice $L$ is said to be meet continuous when for any directed set $D \subseteq L$, and for any $a \in L$ we have the following, $a \wedge \left( \bigvee_{d\in D} d \right) = \bigvee_{d \in D} (a \wedge d)$.  Any complete algebraic lattice is meet-continuous. 

An element $a$ in a lattice $L$ is join-inaccessible when for a directed set $D \subseteq L$, $a = \bigvee_{d \in D} d$ implies $a = d$ for some $d \in D$.  In a complete meet-continuous lattice $L$, an element $c$ is compact if and only if $c$ is join-inaccessible.  

We would like to apply this last piece about the relationship of compact elements and join-inaccessible elements to help us distinguish some compact elements of $a.c.o.(L)$. To do this we will first prove $a.c.o.(L)$ is a meet continuous lattice, thus we will begin by looking at how directed sets behave in $a.c.o.(L)$.  
 
\begin{lemma}\label{dir_join_L}
Let $L$ be a complete algebraic lattice, let the family $( \sigma_\psi | \psi \in \Psi)$ be a directed set in 
$a.c.o.(L)$, then  $ \left( \bigvee_{\psi \in \Psi} \sigma_\psi \right)(x) = \bigvee_{\psi \in \Psi} \left( \sigma_\psi (x) \right)$.     
\end{lemma}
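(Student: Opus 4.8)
The plan is to read off the join in $a.c.o.(L)$ from Proposition~\ref{join_arb_alg_L} and then use directedness of the family to collapse the long composites appearing there. Write $\mu := \bigvee_{\psi\in\Psi}\sigma_\psi$ for the join taken in $a.c.o.(L)$. One inequality is free and needs no hypothesis on the family: since $\sigma_\psi\leq\mu$ in $a.c.o.(L)$ for every $\psi$, we get $\sigma_\psi(x)\leq\mu(x)$ for all $\psi$, and taking the join over $\psi$ in $L$ yields $\bigvee_{\psi\in\Psi}\sigma_\psi(x)\leq\mu(x)$.

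For the reverse inequality I would invoke Proposition~\ref{join_arb_alg_L}, which gives
$$
\mu(x)=\bigvee_{j\in\Psi^{k},\,k\geq 0}\sigma_{j_1}\sigma_{j_2}\cdots\sigma_{j_k}(x),
$$
so it suffices to show each finite composite satisfies $\sigma_{j_1}\cdots\sigma_{j_k}(x)\leq\bigvee_{\psi\in\Psi}\sigma_\psi(x)$. Fix a word $j=(j_1,\dots,j_k)$. Because $(\sigma_\psi\mid\psi\in\Psi)$ is directed, iterating the two-element upper-bound property finitely many times produces a single $\psi^{*}\in\Psi$ with $\sigma_{j_i}\leq\sigma_{\psi^{*}}$ for all $1\leq i\leq k$. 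A short induction using Property C3 (isotonicity, applied inside the outer factors) together with the pointwise order $\sigma_{j_i}\leq\sigma_{\psi^{*}}$ gives $\sigma_{j_1}\cdots\sigma_{j_k}(x)\leq\sigma_{\psi^{*}}^{\,k}(x)$, and Property C2 (idempotence) collapses the right-hand side to $\sigma_{\psi^{*}}(x)$; the case $k=0$ is handled by Property C1, since the empty composite is the identity and $x\leq\sigma_{\psi}(x)$ for any $\psi\in\Psi$ (a directed set being nonempty). Hence $\sigma_{j_1}\cdots\sigma_{j_k}(x)\leq\sigma_{\psi^{*}}(x)\leq\bigvee_{\psi\in\Psi}\sigma_\psi(x)$, which finishes the argument; equivalently, $\{\sigma_\psi(x)\mid\psi\in\Psi\}$ is cofinal in the family of composites, so the two sets have the same join. (If one prefers to mirror the compact-element style used elsewhere in the paper, take a compact $a\leq_{c}\mu(x)$, use Lemma~\ref{finitary_arb_mu_L}(1) to pin $a$ below one composite $\sigma_{j_1}\cdots\sigma_{j_k}(x)$, and then run exactly the same directedness argument.)

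The only genuinely non-formal point is the passage from pairwise directedness to a single $\psi^{*}$ dominating $\sigma_{j_1},\dots,\sigma_{j_k}$ simultaneously, followed by the careful bookkeeping that turns $\sigma_{j_1}\cdots\sigma_{j_k}(x)$ into $\sigma_{\psi^{*}}(x)$ via C3 and C2; both are elementary. It is worth noting that this lemma does not appear to need meet-continuity or any completeness of $L$ beyond what Proposition~\ref{join_arb_alg_L} already uses — directedness of the operator family is doing all of the work, and this is precisely what makes $\mu(x)$ computable as a plain pointwise join in this case.
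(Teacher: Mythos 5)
Your proof is correct and follows essentially the same route as the paper: both invoke Proposition~\ref{join_arb_alg_L} to express the join as a supremum of finite composites, use directedness to find a single $\sigma_{\psi^*}$ dominating all factors of a given composite, and collapse $\sigma_{\psi^*}^{\,k}(x)$ to $\sigma_{\psi^*}(x)$ via isotonicity and idempotence. Your treatment is in fact slightly more careful than the paper's, since you explicitly handle the empty composite ($k=0$) and spell out the induction that bounds the composite by $\sigma_{\psi^*}^{\,k}(x)$.
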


\begin{proof}
Consider $\sigma_n \sigma_{n-1}...\sigma_1$ where $\sigma_i \in( \sigma_\psi| d \in \Psi)$ for all $1 \leq i \leq n$, let $\sigma_{d}$ be the upper bound in the directed set. 
Making \\ 
$\sigma_n \sigma_{n-1}...\sigma_1(x)~\leq~(\sigma_{d})^n(x)~=~\sigma_{d}(x)$ 
for all $x~\in~L$.  Thus for any finite compose in the directed set we find 
$\sigma_n \sigma_{n-1}...\sigma_1(x)~\leq~\bigvee_{\psi \in \Psi}\left(\sigma_\psi (x)\right)$. 
This puts 
$$
\left( \bigvee_{\psi \in \Psi } \sigma_\psi \right) (x) =\bigvee_{ j \in \Psi^k, k\geq 1} \left( \sigma_{j_1} ... \sigma{j_k} (x) \right) \leq  \bigvee_{\psi \in \Psi} \left( \sigma_d (x) \right).
$$ 
Thus we have $\bigvee_{\psi \in \Phi} \left( \sigma_\psi (x) \right) = \left( \bigvee_{\psi \in \Psi} \sigma_\psi \right) (x) \hspace{.15in} \forall x \in L$.   
\end{proof} 

\begin{lemma}\label{meet_cont_L}
Let $L$ be a complete algebraic lattice. Then $a.c.o.(L)$ is a meet continuous lattice.  
\end{lemma}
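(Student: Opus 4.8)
The plan is to reduce meet-continuity of $a.c.o.(L)$ to meet-continuity of the underlying lattice $L$, using Lemma~\ref{dir_join_L} to pull joins of directed families inside evaluation at a point. Fix a directed set $(\sigma_\psi \mid \psi \in \Psi)$ in $a.c.o.(L)$ and an arbitrary $\rho \in a.c.o.(L)$; the goal is $\rho \wedge \bigl(\bigvee_{\psi \in \Psi} \sigma_\psi\bigr) = \bigvee_{\psi \in \Psi}(\rho \wedge \sigma_\psi)$. The inequality $\bigvee_{\psi}(\rho \wedge \sigma_\psi) \leq \rho \wedge \bigl(\bigvee_{\psi}\sigma_\psi\bigr)$ holds in any lattice since meet is isotone, so the content is the reverse inequality; in fact it is cleanest to establish equality directly by evaluating both sides at a fixed but arbitrary $x \in L$.

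For the left side, first note that binary meets in $a.c.o.(L)$ agree with those in $c.o.(L)$: by Proposition~\ref{meetL}, $a.c.o.(L)$ is closed under binary meet, so the meet is inherited from $c.o.(L)$, where it is computed pointwise by Theorem~\ref{lattice_comp_L}. Hence $\bigl(\rho \wedge (\bigvee_\psi \sigma_\psi)\bigr)(x) = \rho(x) \wedge \bigl(\bigvee_\psi \sigma_\psi\bigr)(x)$. Since $(\sigma_\psi \mid \psi \in \Psi)$ is directed, Lemma~\ref{dir_join_L} gives $\bigl(\bigvee_\psi \sigma_\psi\bigr)(x) = \bigvee_\psi \sigma_\psi(x)$, and moreover $(\sigma_\psi(x) \mid \psi \in \Psi)$ is a directed subset of $L$ (an upper bound $\sigma_d$ of $\sigma_{\psi_1},\sigma_{\psi_2}$ gives $\sigma_d(x) \geq \sigma_{\psi_i}(x)$). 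So the left side is $\rho(x) \wedge \bigvee_\psi \sigma_\psi(x)$. For the right side, observe that $(\rho \wedge \sigma_\psi \mid \psi \in \Psi)$ is again directed in $a.c.o.(L)$, since $\sigma_{\psi_1},\sigma_{\psi_2} \leq \sigma_d$ forces $\rho \wedge \sigma_{\psi_1}, \rho \wedge \sigma_{\psi_2} \leq \rho \wedge \sigma_d$. Applying Lemma~\ref{dir_join_L} to this family and then the pointwise meet formula, the right side is $\bigvee_\psi (\rho \wedge \sigma_\psi)(x) = \bigvee_\psi \bigl(\rho(x) \wedge \sigma_\psi(x)\bigr)$. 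The two evaluations now coincide by meet-continuity of $L$: $L$ is a complete algebraic lattice, hence meet-continuous, so applied to the directed set $(\sigma_\psi(x) \mid \psi \in \Psi)$ and the element $\rho(x)$ it yields $\rho(x) \wedge \bigvee_\psi \sigma_\psi(x) = \bigvee_\psi(\rho(x) \wedge \sigma_\psi(x))$. Since $x$ was arbitrary, the two operators are equal, which is exactly meet-continuity of $a.c.o.(L)$.

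The potential obstacle here is bookkeeping rather than depth. One must be careful (i) to invoke the \emph{binary} meet formula, namely the pointwise meet of Theorem~\ref{lattice_comp_L} as inherited via Proposition~\ref{meetL}, and not accidentally drag in the more involved arbitrary-meet description of Proposition~\ref{def_arb_meet_L}; and (ii) to verify that \emph{both} families, $(\sigma_\psi \mid \psi)$ and $(\rho \wedge \sigma_\psi \mid \psi)$, are directed so that Lemma~\ref{dir_join_L} genuinely applies at each place it is used. Once these points are secured, meet-continuity of $a.c.o.(L)$ is nothing more than meet-continuity of $L$ read off pointwise.
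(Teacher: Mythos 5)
Your proof is correct and follows essentially the same route as the paper's: evaluate both sides pointwise using the binary meet formula, apply Lemma~\ref{dir_join_L} to the directed families $(\sigma_\psi)$ and $(\rho \wedge \sigma_\psi)$ after checking each is directed, and finish with meet-continuity of the algebraic lattice $L$. The only difference is presentational — you are slightly more explicit about why the pointwise meet formula is legitimate in $a.c.o.(L)$.
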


\begin{proof}
 Let $ ( \sigma_\psi| \psi \in \Psi)$ be directed set in $a.c.o.(L)$, $\phi \in a.c.o.(L)$, and $x \in L$.  
Now we consider the following:
$$
\left( \phi \wedge \left(\bigvee_{\psi \in \Psi} \sigma_\psi \right) \right) (x) = \phi (x) \wedge \left(\bigvee_{\psi\in \Psi} \sigma_\psi \right)(x)= \phi (x) \wedge \left( \bigvee_{\psi\in \Psi} \sigma_\psi (x) \right).
$$ 
Let $\textit{X} = \left\{ \sigma_\psi(x) | \psi \in \Psi \right\} \subseteq L$ and consider $\sigma_1 (x), \sigma_2(x) \in  \textit{X}$; this would put $\sigma_1, \sigma_2$ in $ (\sigma_\psi | \psi \in \Psi )$, the directed set.  
So there must be an element $\sigma_{3}\in  (\sigma_\psi | \psi \in \Psi )$ such that $\sigma_1, \sigma_2 \leq \sigma_{3}$, but this would make $\sigma_1(x), \sigma_2(x) \leq \sigma_{3}(x)$.  Thus $\textit{X}$ a directed set in $L$.  
$$
\phi (x) \wedge \left( \bigvee_{\psi\in \Psi} \sigma_\psi (x) \right) =  \bigvee_{\psi \in \Psi} \left( \phi(x) \wedge \sigma_\psi (x) \right) =  \bigvee_{\psi \in \Psi} \left( (\phi \wedge \sigma_\Psi) (x) \right).
$$ 

Let $\Phi =\left\{\phi \wedge \sigma_d | d \in D \right\}$.  Let $\phi \wedge \sigma_1, \phi \wedge \sigma_2 \in \Phi$ then there is a $\sigma_{3} \in  (\sigma_\psi | \psi \in \Psi )$ such that 
$ \sigma_1, \sigma_2 \leq \sigma_3$.  This would then make 
$\phi \wedge \sigma_1, \phi \wedge \sigma_2 \leq \phi \wedge \sigma_{3}$ which is in $\Phi$.  We then have that $\Phi$ is directed.  

Thus, $\left( \bigvee_{\psi \in \Psi} (\phi \wedge \sigma_\psi)\right) (x) =\bigvee_{\psi \in \Psi} \left( (\phi \wedge \sigma_d) (x) \right)$, and   
$$
\left( \phi \wedge \left(\bigvee_{\psi \in \Psi}\sigma_\psi  \right) \right) (x)  = \left( \bigvee_{\psi \in \Psi} (\phi \wedge \sigma_\psi) \right) (x).
$$ 
Making $a.c.o.(L)$ is a meet-continuous lattice.  
\end{proof}

We will show $a.c.o.(L)$ is algebraic by defining a set of compact elements in $a.c.o.(L)$ that will be the building blocks for our lattice. The proof of this first lemma will be left to the reader. 

\begin{lemma}\label{phi_closure_L}
Let $a, b, v \in L$. Then for $x \in L$ let 

\begin{displaymath}\label{phivab_L}
 \phi^v_{a,b}(x) := \left\{ \begin{array}{cl} \nonumber
 x \vee v \vee b , & \textrm{if } a \, \leq x \lor v  \\ 
 x \vee v, & otherwise. \\
 
 \end{array}\right. 
\end{displaymath}
 Then $\phi^v_{a,b}$ is a closure operator for $L$. 
\end{lemma}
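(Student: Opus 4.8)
The plan is to verify directly that $\phi := \phi^v_{a,b}$ satisfies the three axioms C1, C2, C3 of Definition~\ref{closure_lat}, in each case splitting according to whether the defining condition $a \leq x \vee v$ holds. The one fact that makes everything go smoothly is that this condition is monotone in $x$: since $x \leq y$ implies $x \vee v \leq y \vee v$, whenever $a \leq x \vee v$ we also have $a \leq y \vee v$. In particular, for a fixed $x$, applying $\phi$ once can neither destroy the condition when it holds nor create it when it fails, because $\phi(x) \vee v$ equals either $x \vee v$ or $x \vee v \vee b \geq x \vee v$.

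Property C1 is immediate: in either branch $\phi(x)$ is $x \vee v$ or $x \vee v \vee b$, and both dominate $x$. For C3, assume $x \leq y$; by the monotonicity observation the case ``$a \leq x \vee v$ but $a \not\leq y \vee v$'' is vacuous, so only three cases remain. If $a \leq x \vee v$ then also $a \leq y \vee v$ and $\phi(x) = x \vee v \vee b \leq y \vee v \vee b = \phi(y)$; if $a \not\leq x \vee v$ then $\phi(x) = x \vee v \leq y \vee v \leq \phi(y)$ regardless of whether $a \leq y \vee v$. Hence C3 holds.

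For C2 I would compute $\phi(\phi(x))$ in the two cases for $x$. When $a \leq x \vee v$, we have $\phi(x) = x \vee v \vee b$, and since $a \leq x \vee v \leq (x \vee v \vee b) \vee v$ the condition still holds, giving $\phi(\phi(x)) = (x \vee v \vee b) \vee v \vee b = x \vee v \vee b = \phi(x)$. When $a \not\leq x \vee v$, we have $\phi(x) = x \vee v$ and $(x \vee v) \vee v = x \vee v \not\geq a$, so the condition still fails and $\phi(\phi(x)) = x \vee v = \phi(x)$. Thus $\phi$ is idempotent, and $\phi^v_{a,b} \in c.o.(L)$. There is essentially no obstacle here beyond organizing the case split cleanly; the substantive point, which is presumably reused in the subsequent compactness argument, is just the monotonicity of the predicate $a \leq x \vee v$.
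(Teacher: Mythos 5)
Your proof is correct. The paper explicitly leaves this lemma's proof to the reader, and your direct case-by-case verification of C1, C2, C3 --- hinging on the monotonicity of the predicate $a \leq x \vee v$ and the fact that applying $\phi^v_{a,b}$ neither creates nor destroys that condition --- is exactly the intended routine argument, carried out cleanly and completely.
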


\begin{lemma}\label{phi_alg_L} 
Let $\phi^v_{a,b}$ be defined as in Lemma~\ref{phivab_L}. If $a \in C(L)$ then $\phi^v_{a,b}$ is algebraic. 
\end{lemma}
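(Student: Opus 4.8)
The plan is to invoke the fact recorded just after Lemma~\ref{phivab_L}, namely that $\phi^v_{a,b}$ is already a closure operator, so the only thing left to verify is Property~F, i.e.\ that $\phi^v_{a,b}$ is finitary. By Remark~\ref{geq_alg} the inequality $\phi^v_{a,b}(x) \geq \bigvee_{k \leq_c x}\phi^v_{a,b}(k)$ holds for any isotone operator, so for a fixed $x \in L$ it suffices to establish the reverse inequality $\phi^v_{a,b}(x) \leq \bigvee_{k \leq_c x}\phi^v_{a,b}(k)$.

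First I would record the free lower bound $\bigvee_{k \leq_c x}\phi^v_{a,b}(k) \geq \bigvee_{k \leq_c x}(k \vee v) = \left(\bigvee_{k \leq_c x} k\right) \vee v = x \vee v$, using that $L$ is algebraic. This immediately disposes of the case $a \not\leq x \vee v$, where by definition $\phi^v_{a,b}(x) = x \vee v$.

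The substantive case is $a \leq x \vee v$, where $\phi^v_{a,b}(x) = x \vee v \vee b$ and we must recover the extra summand $b$ on the right-hand side. Here I would use the hypothesis $a \in C(L)$: the family $\{\,k \vee v \mid k \leq_c x\,\}$ is directed (if $k_1,k_2 \leq_c x$ then $k_1 \vee k_2 \leq_c x$ and $k_1 \vee v, k_2 \vee v \leq (k_1 \vee k_2)\vee v$) and its join is $x \vee v \geq a$, so by the directed-join characterization of compactness quoted at the start of Section~\ref{proposition_acoL} there is some $k_0 \leq_c x$ with $a \leq k_0 \vee v$. Then for \emph{every} $k \leq_c x$ the element $k_0 \vee k$ is compact, lies below $x$, and satisfies $a \leq (k_0 \vee k) \vee v$, so $\phi^v_{a,b}(k_0 \vee k) = k_0 \vee k \vee v \vee b$. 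Joining over all such $k$ gives $\bigvee_{k \leq_c x}\phi^v_{a,b}(k) \geq \bigvee_{k \leq_c x}(k_0 \vee k \vee v \vee b) = k_0 \vee v \vee b \vee \bigvee_{k \leq_c x} k = x \vee v \vee b = \phi^v_{a,b}(x)$, completing the case and the proof.

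I do not expect a serious obstacle; the only point requiring care is that a single compact witness $k_0$ with $a \leq k_0 \vee v$ does not by itself suffice, since its image $k_0 \vee v \vee b$ need not dominate $x$ — one must run over all compact $k \leq x$ and pass to $k_0 \vee k$, so that these enlarged arguments simultaneously keep $a$ below the argument (triggering the $b$ branch) and recover all of $x$ below the closure. As an alternative one could route the whole argument through Lemma~\ref{a_lee_mu}: given compact $c \leq \phi^v_{a,b}(x)$, pick $k_0 \leq_c x$ with $a \leq k_0 \vee v$ (when $a \leq x \vee v$) and $k_1 \leq_c x$ with $c \leq k_1 \vee v \vee b$, and verify $c \leq \phi^v_{a,b}(k_0 \vee k_1)$; this is equivalent but carries slightly more bookkeeping.
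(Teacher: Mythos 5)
Your proposal is correct and follows essentially the same route as the paper: the case $a \not\leq x \vee v$ is handled identically via $\bigvee_{k \leq_c x}(k \vee v) = x \vee v$, and in the case $a \leq x \vee v$ both arguments use compactness of $a$ to extract a single compact witness $k_0 \leq_c x$ with $a \leq k_0 \vee v$ (the paper via a finite subcover, you via the directed-join formulation) and then join it with the remaining compacts below $x$ to recover $x \vee v \vee b$. The only cosmetic difference is that the paper writes the final join as $\phi^v_{a,b}(k^*) \vee \bigvee_{k \leq_c x}(k \vee v)$ while you join over the enlarged arguments $k_0 \vee k$; both are valid.
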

\begin{proof}
Let $x \in L$.  If $ a \not\leq (x \vee v) $ then $a \not\leq (k \vee v)$ for all $k \leq_c x $.  Thus 
$$
\phi^v_{a,b}(x) = x \vee v = \left(\bigvee_{k \leq_c x} k\right) \vee v =\bigvee_{k \leq_c x } ( k \vee v ) = \bigvee_{k \leq_c x} \phi^v_{a,b}(k).  
$$

If $ a \leq (x \vee v) = \left(\bigvee_{k \leq_c x} k\right) \vee v $ then because $a$ is compact $ a \leq \left(\bigvee_{k \in F} k \right) \vee v$  where $F$ is a finite subset of the compact elements less than $x$.  
Let \\
$k^*~=~\bigvee_{k \in F}~k$.  $k^*$ is a compact element, thus $ a \leq k^* \vee v$ for some $k^* \leq_c x$ and $ \phi^v_{a,b} (k^*) = k^* \vee v \vee b $.  This makes 
$$
\phi^v_{a,b} (x) = x \vee v \vee b = \left(\bigvee_{k \leq_c x} k\right) \vee v \vee b = (k^* \vee v \vee b) \vee \left( \bigvee_{k \leq_c x} k \vee v \right) = \bigvee_{k \leq_c x} \phi^v_{a,b} (k).    
$$
Thus, $\phi^v_{a,b}$ algebraic.   
\end{proof}

\begin{lemma}\label{phi_compact_L}
Let $\phi^v_{a,b}$ be defined as in Lemma~\ref{phivab_L} and let $v, a,b \in C(L)$ then $\phi^v_{a,b}$ is compact in $a.c.o.(L)$. 
\end{lemma}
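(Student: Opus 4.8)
The plan is to invoke the criterion, recalled just before this lemma, that in a complete meet-continuous lattice an element is compact exactly when it is join-inaccessible: since $a.c.o.(L)$ is a complete lattice and is meet-continuous by Lemma~\ref{meet_cont_L}, it suffices to show that $\phi^v_{a,b}$, which belongs to $a.c.o.(L)$ by Lemma~\ref{phi_alg_L}, is join-inaccessible. So I would take a directed family $(\sigma_\psi\mid\psi\in\Psi)$ in $a.c.o.(L)$ with $\bigvee_{\psi}\sigma_\psi=\phi^v_{a,b}$ and produce a single index $\psi_0$ with $\sigma_{\psi_0}=\phi^v_{a,b}$. By Lemma~\ref{dir_join_L} the join is pointwise, so $\phi^v_{a,b}(x)=\bigvee_{\psi}\sigma_\psi(x)$ for every $x\in L$, and each image $\{\sigma_\psi(x)\mid\psi\in\Psi\}$ is a directed subset of $L$ (an upper bound $\sigma_{\psi''}$ of $\sigma_\psi,\sigma_{\psi'}$ in the family gives an upper bound $\sigma_{\psi''}(x)$ in $L$).

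The heart of the argument is that all the information carried by $\phi^v_{a,b}$ is concentrated at two test points. Since $v,a,b\in C(L)$, the join $a\vee v\vee b$ is again compact; because $a\le a\vee v$ we have $\phi^v_{a,b}(a)=a\vee v\vee b=\bigvee_\psi\sigma_\psi(a)$, so compactness of $a\vee v\vee b$ together with the directedness of $\{\sigma_\psi(a)\}$ yields $\psi_1$ with $a\vee v\vee b\le\sigma_{\psi_1}(a)$; as $\sigma_{\psi_1}\le\phi^v_{a,b}$ this forces $\sigma_{\psi_1}(a)=a\vee v\vee b$. Next, letting $0=\bigwedge L$ (which exists since $L$ is complete), we have $v\le 0\vee v\le\phi^v_{a,b}(0)=\bigvee_\psi\sigma_\psi(0)$, so compactness of $v$ gives $\psi_2$ with $v\le\sigma_{\psi_2}(0)$. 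Finally I would use directedness of the family to choose $\psi_0$ above both $\psi_1$ and $\psi_2$, so that simultaneously $b\le\sigma_{\psi_0}(a)$, $v\le\sigma_{\psi_0}(0)$, and $\sigma_{\psi_0}\le\phi^v_{a,b}$.

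It then remains to check $\phi^v_{a,b}\le\sigma_{\psi_0}$, and this is a short computation with the closure axioms. From $v\le\sigma_{\psi_0}(0)\le\sigma_{\psi_0}(x)$ (isotonicity, C3) and $x\le\sigma_{\psi_0}(x)$ (extensivity, C1) we get $x\vee v\le\sigma_{\psi_0}(x)$ for all $x$; hence if $a\not\le x\vee v$ then $\phi^v_{a,b}(x)=x\vee v\le\sigma_{\psi_0}(x)$. If instead $a\le x\vee v$, then $a\le\sigma_{\psi_0}(x)$, so by C3 and idempotence (C2), $\sigma_{\psi_0}(a)\le\sigma_{\psi_0}(\sigma_{\psi_0}(x))=\sigma_{\psi_0}(x)$, and therefore $b\le\sigma_{\psi_0}(a)\le\sigma_{\psi_0}(x)$, giving $\phi^v_{a,b}(x)=x\vee v\vee b\le\sigma_{\psi_0}(x)$. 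In both cases $\phi^v_{a,b}(x)\le\sigma_{\psi_0}(x)$, so $\phi^v_{a,b}=\sigma_{\psi_0}$, showing $\phi^v_{a,b}$ is join-inaccessible and hence compact in $a.c.o.(L)$.

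The step I expect to be the main obstacle is the extraction of the \emph{single} witness $\sigma_{\psi_0}$: evaluating the directed join at $a$ alone only controls the ``$b$''-part on elements $\ge a$ and says nothing about the built-in ``$\vee v$'', so one must separately force the $v$-behavior at the bottom of $L$ and then merge the two witnesses via directedness; after that, idempotence does the rest. (One should also keep in mind that ``compact in $a.c.o.(L)$'' is the correct target, but this causes no trouble because, by Proposition~\ref{join_arb_alg_L}, joins in $a.c.o.(L)$ coincide with joins in $c.o.(L)$.)
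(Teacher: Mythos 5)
Your proof is correct and follows essentially the same route as the paper's: reduce compactness to join-inaccessibility via meet-continuity, extract one witness by evaluating the pointwise directed join at $a$ (using compactness of $a\vee v\vee b$) and another at $0$ (using compactness of $v$), merge them by directedness, and verify $\phi^v_{a,b}\le\sigma_{\psi_0}$ by cases on whether $a\le x\vee v$. If anything, your handling of the case $a\le x\vee v$ with $x\not\ge a$ (via isotonicity and idempotence of $\sigma_{\psi_0}$) is slightly more complete than the paper's, whose Case~1 only treats $c\ge a$.
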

\begin{proof}
To show that $\phi^v_{a,b}$ is a compact element, we will show that it is join-inaccessible. Let the family $(\sigma_\psi | \psi\in \Psi)$ be a directed set in $a.c.o.(L)$ and where $\phi^v_{a,b}(x)~=~\left(\bigvee_{\psi \in \Psi}\sigma_\psi \right)(x)$ for all $x\in L$.  Since $\phi^v_{a,b}$ is finitary we need only consider the compact elements. Let $c \in C(L)$.\\ 
Case 1 $(a \leq c \vee v)$:  First we will look at when $c = a$,  $ \phi^v_{a,b} (a) =\left(\bigvee_{\psi \in \Psi } \sigma_\psi \right)  (a)=  \bigvee_{\psi \in \Psi} (\sigma_\psi (a))$.  Because $\phi^v_{a,b}(a) = a\vee v \vee b$ is compact in $L$ we can reduce the join in $L$ to get $ \phi^v_{a,b} (a) =  \sigma_a (a)$ for some $\sigma_a \in (\sigma_\psi | \psi \in \Psi )$. Now looking at  $c \geq a$,  
$$
\sigma_a (c)  \leq   \left( \bigvee_{\psi \in \Psi} \sigma_\psi \right) (c)    =   \phi^v_{a,b}(c) = c \vee v \vee b = c \vee a \vee v \vee b = \displaystyle c \vee \sigma_a (a) \leq \sigma_a (c).
$$
 Thus for $c \geq a$, $\phi^v_{a,b} (c) = \sigma_a (c)$.  \\
Case 2 $(c \vee v \not\geq a)$: Similarly to case one we will look at when $c=0$, the least element of $L$.   
$$
0 \vee v = \phi^v_{a,b}(0)=\left( \bigvee_{d \in D} \sigma_d \right) (0) = \bigvee_{d \in D} (\sigma_d (0)) =  \sigma_0 (0)  
$$  
for some $\sigma_0 \in (\sigma_\psi | \psi \in \Psi )$.  Then for $a \not\leq c\vee v$ 
$$
\sigma_0 (c) \leq \phi^v_{a,b}(c)= c \vee v = c \vee \phi^v_{a,b} (0) = c \vee \sigma_0 (0) \leq \sigma_0(c).
$$  
This makes $ \phi^v_{a,b}(c) = \sigma_0 (c)$ for $c \vee v \not\geq a$.  

For any $c \in C(L)$, $\displaystyle \phi^v_{a,b}(c) = \left( \bigvee_{\psi \in D} \sigma_\psi \right) (c) = \left(\sigma_0 \vee \sigma_a \right) (c) = \sigma_b (c)$ for some $\sigma_b \in (\sigma_\psi | \psi \in \Psi )$ given our directed set.  This would make $\phi^v_{a,b}$ join-inaccessible in $a.c.o.(L)$.  Which is to say $\phi^v_{a,b} \in C(a.c.o.(L))$.     
\end{proof}

\begin{lemma}\label{join_compact_L}
 Let $\phi^v_{a,b}$ be defined as in Definition~\ref{phivab_L}. Let $a,b \in C(L)$ but $v$ does not have to be compact.  Then $\phi^v_{a,b}= \bigvee_{k \leq_c v} \phi^k_{a,b}$  
\end{lemma}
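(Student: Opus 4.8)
The plan is to prove the operator identity by establishing the two inequalities $\bigvee_{k \le_c v} \phi^k_{a,b} \le \phi^v_{a,b}$ and $\phi^v_{a,b} \le \bigvee_{k \le_c v} \phi^k_{a,b}$. Since $a \in C(L)$, Lemma~\ref{phi_alg_L} shows $\phi^v_{a,b}$ and each $\phi^k_{a,b}$ with $k \le_c v$ lie in $a.c.o.(L)$, so the join on the right is the one described in Proposition~\ref{join_arb_alg_L}. First I would record the monotonicity $\phi^{k}_{a,b} \le \phi^{k'}_{a,b}$ whenever $k \le k'$: for $x \in L$, if $a \le x \vee k$ then $a \le x \vee k'$ and $\phi^{k}_{a,b}(x) = x \vee k \vee b \le x \vee k' \vee b = \phi^{k'}_{a,b}(x)$, while if $a \not\le x \vee k$ then $\phi^{k}_{a,b}(x) = x \vee k \le x \vee k' \le \phi^{k'}_{a,b}(x)$. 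Taking $k' = v$ this shows $\phi^v_{a,b}$ is an upper bound of the family, which gives the first inequality; taking $k' = k_1 \vee k_2$ (compact and $\le v$) it shows the family $(\phi^k_{a,b} \mid k \le_c v)$ is directed, so by Lemma~\ref{dir_join_L} its join $\mu$ is computed pointwise, $\mu(x) = \bigvee_{k \le_c v} \phi^k_{a,b}(x)$.

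For the reverse inequality I would first note $\phi^k_{a,b}(x) \ge x \vee k$ for every $k$, so, using that $L$ is algebraic and $v = \bigvee_{k \le_c v} k$, we get $\mu(x) \ge \bigvee_{k \le_c v}(x \vee k) = x \vee v$ for all $x$. If $a \not\le x \vee v$ then $\phi^v_{a,b}(x) = x \vee v \le \mu(x)$, done. If $a \le x \vee v$, I would write $x = \bigvee_{l \le_c x} l$, so that $x \vee v = \bigvee\{\, j \in C(L) : j \le x \text{ or } j \le v \,\}$ is a join of compact elements; compactness of $a$ puts $a$ below a finite subjoin, and letting $k^{\ast}$ be the join of the finitely many terms appearing that are $\le v$, one gets $k^{\ast} \le_c v$ and $a \le x \vee k^{\ast}$. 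Then $\phi^{k^{\ast}}_{a,b}(x) = x \vee k^{\ast} \vee b$, so $\mu(x) \ge b$; combined with $\mu(x) \ge x \vee v$ this yields $\mu(x) \ge x \vee v \vee b = \phi^v_{a,b}(x)$, completing the second inequality and hence $\mu = \phi^v_{a,b}$.

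The only step that needs real care is the last one — from $a \le x \vee v$ producing a single compact $k^{\ast} \le_c v$ with $a \le x \vee k^{\ast}$ — which is exactly where compactness of $a$ is used and where it matters that $x$ itself need not be compact, so $x$ is kept intact while only the $v$-part of the finite subcover is contracted; this is the same maneuver already used in the proof of Lemma~\ref{phi_alg_L}. Everything else is the routine two-case bookkeeping coming straight from the definition of $\phi^v_{a,b}$ in Lemma~\ref{phivab_L}, and one notices along the way that compactness of $b$ plays no role in this lemma.
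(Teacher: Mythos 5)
Your proof is correct and follows essentially the same route as the paper's: the decisive step in both is using compactness of $a$ to extract a single compact $k^{\ast} \leq_c v$ with $a \leq x \vee k^{\ast}$, and the two-case split on whether $a \leq x \vee v$ is identical. The only organizational difference is that you evaluate the join pointwise via directedness of the family $(\phi^k_{a,b} \mid k \leq_c v)$ and Lemma~\ref{dir_join_L}, working with arbitrary $x \in L$, whereas the paper reduces to compact arguments $l$ using that both sides are finitary; both devices are valid, and your observation that compactness of $b$ is not used here is consistent with the paper's argument as well.
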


\begin{proof}
 First we need to make an observation. If $k \leq_c v$ then \\
$ \left(\bigvee_{k \leq_c v} \phi^k_{a,b}\right)(x) \leq \phi^v_{a,b}(x)\textnormal{ for all } x \in L $.

Since $\phi^v_{a,b}$ and $\phi^k_{a,b}$  are algebraic  showing  $\phi^v_{a,b}(l) \leq \left(\bigvee_{k \leq_c v} \phi^k_{a,b}\right)(l)$ for all $l \in C(L)$ will suffice. 

 Let $l \in C(L)$ such that $ a \leq l \vee v = l \vee \left( \bigvee_{k \leq_c v} k \right)$.
Since $a$ is compact this join can be reduced to a finite join. $ a \leq l \vee \left(\bigvee_{k \in F_a} k \right)$.  
Then for $k_a = \bigvee_{k \in F_a} k$,  $k_a$ is a compact and we have 
$$
\phi^v_{a,b}(l) = l \vee v \vee b  = l \vee k_a \vee b \vee \left(\bigvee_{k \leq_c v} k \right) \leq%
\phi_{a,b}^{k_a} (l) \vee \left(\bigvee_{k \leq_c v} \phi^k_{a,b}(l) \right) \leq \bigvee_{k \leq_c v} \phi^k_{a,b}(l).  
$$
For $a \not\leq l \vee v$, 
$$
\phi^v_{a,b}(l) = l \vee v = l \vee \left( \bigvee_{k \leq_c v} k \right) =\bigvee_{k \leq_c v}\left(\phi^k_{a,b}(l)\right)= \left( \bigvee_{k \leq_c v} \phi^k_{a,b} \right) (l).
$$  
We thus have in either case $\phi^v_{a,b}(l) \leq \bigvee_{k \leq_c v}\phi^k_{a,b}(l)$ for all $l \in C(L)$, $\phi^v_{a,b} =  \bigvee_{k \leq_c v} \phi^k_{a,b}$.  
\end{proof}

\begin{lemma}\label{all_join_compact_L}
 Let $\sigma \in a.c.o.(L)$. Then 
$$
\sigma = \bigvee_{a \in C(L)} \bigvee_{b \leq_c \sigma(a)} \phi^{v}_{a,b}  \ \ \textnormal{ where } v=\sigma(0).
$$
\end{lemma}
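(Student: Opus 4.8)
The plan is to write $\Theta := \bigvee_{a\in C(L)}\bigvee_{b\leq_c\sigma(a)}\phi^v_{a,b}$ with $v=\sigma(0)$, regard this as a single join over the index set $\{(a,b)\mid a\in C(L),\ b\leq_c\sigma(a)\}$, and prove $\Theta\leq\sigma$ and $\sigma\leq\Theta$ separately. First I would record that every operator in the family lies in $a.c.o.(L)$: each $\phi^v_{a,b}$ with $a\in C(L)$ is algebraic by Lemma~\ref{phi_alg_L}, and by Proposition~\ref{join_arb_alg_L} an arbitrary join of algebraic closure operators is again algebraic, so $\Theta\in a.c.o.(L)$; in particular $\Theta$ is finitary, which is what makes the reduction to compact elements below legitimate.

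For $\Theta\leq\sigma$ it is enough to check $\phi^v_{a,b}\leq\sigma$ for each admissible pair, i.e.\ $\phi^v_{a,b}(x)\leq\sigma(x)$ for all $x\in L$. Since $\sigma$ is extensive and isotone, $x\vee v=x\vee\sigma(0)\leq\sigma(x)$, which already handles the case $a\not\leq x\vee v$. When $a\leq x\vee v$ we must also bound $b$: from $a\leq x\vee v\leq\sigma(x)$ and idempotence we get $\sigma(a)\leq\sigma\sigma(x)=\sigma(x)$, and since $b\leq_c\sigma(a)$ this yields $b\leq\sigma(x)$, hence $\phi^v_{a,b}(x)=x\vee v\vee b\leq\sigma(x)$. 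Thus $\sigma$ is an upper bound of the family and $\Theta\leq\sigma$.

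For $\sigma\leq\Theta$, since both $\sigma$ and $\Theta$ are finitary it suffices to show $\sigma(c)\leq\Theta(c)$ for every $c\in C(L)$, and because $L$ is algebraic this in turn reduces to showing $b\leq\Theta(c)$ for every $b\leq_c\sigma(c)$. Fixing such $c$ and $b$, the pair $(c,b)$ is an admissible index (indeed $c\in C(L)$ and $b\leq_c\sigma(c)$), so $\phi^v_{c,b}\leq\Theta$; evaluating at $c$ and using $c\leq c\vee v$ we get $\phi^v_{c,b}(c)=c\vee v\vee b\geq b$, whence $b\leq\phi^v_{c,b}(c)\leq\Theta(c)$. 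Therefore $\sigma(c)=\bigvee_{b\leq_c\sigma(c)}b\leq\Theta(c)$, which gives $\sigma\leq\Theta$ and hence $\sigma=\Theta$.

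No step is a genuine obstacle; the only points needing care are invoking Proposition~\ref{join_arb_alg_L} to know $\Theta$ is finitary (so the whole argument may be run on compact elements), the small idempotence computation $a\leq x\vee\sigma(0)\Rightarrow\sigma(a)\leq\sigma(x)$ in the first inequality, and observing that the term $\phi^v_{c,b}$ used in the second inequality is precisely one of the indexed operators.
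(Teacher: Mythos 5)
Your proof is correct and follows essentially the same route as the paper: the same case analysis (with the idempotence step $a\leq x\vee v\leq\sigma(x)\Rightarrow\sigma(a)\leq\sigma(x)$) establishes $\phi^v_{a,b}\leq\sigma$ for each admissible pair, and the same choice of witness $\phi^v_{c,b}$ with $b\leq_c\sigma(c)$ gives the reverse inequality on compact elements. The only difference is technical and works in your favor: where the paper evaluates the join pointwise by appealing to Lemma~\ref{dir_join_L} on directed joins, you use only the least-upper-bound property of the join together with Proposition~\ref{join_arb_alg_L} (to know $\Theta$ is finitary), which sidesteps any need to verify that the indexed family of operators is directed.
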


\begin{proof}
 First of all, to ease between joins in $L$ and joins in $a.c.o.(L)$ we need to note that $C(L)$ is a directed set, and for any $x \in L$, 
$\displaystyle \left\{l \in C(L) | l  \leq_c x  \right\}$ is also directed.  Thus by Lemma~\ref{dir_join_L} we have 
$$
\left( \bigvee_{a \in C(L)} \bigvee_{b \leq_c \sigma(a)} \phi^{v}_{a,b} \right) (x) = \bigvee_{a \in C(L)} \bigvee_{b \leq_c \sigma(a)} ( \phi^v_{a,b} (x) ).
$$
Let $x \in L$, $a \in C(L)$, and $b \leq_c \sigma(a)$.  If $\phi^v_{a,b}(x) = x \vee v$ and we have $ x \leq \sigma(x)$ and $ v = \sigma(0) \leq \sigma(x)$ 
then $\displaystyle \phi^v_{a,b}(x)= x \vee v = x \vee \sigma(0) \leq \sigma(x)$.  If $a \leq x \vee v$, which is to say $\phi^v_{a,b}(x) = x \vee v \vee b$, then we have $x \vee v  \leq \sigma(x)$ and 
$a, b \leq \sigma(a)$.  These inequalities give us
 $a \leq \sigma(x \vee v) \leq \sigma(\sigma(x)) = \sigma(x)$ which, by closure operator properties, 
yields $\sigma(a) \leq \sigma(x)$.  We then have $a \vee b \leq \sigma(a) \leq \sigma (x)$ so $a \vee b \vee x \vee v =x \vee v \vee b \leq \sigma (x)$.  Thus, if 
$\phi^v_{a,b}(x) = x \vee v$ or $ \phi^v_{a,b}(x) = x \vee v \vee b$ then $\phi^v_{a,b}(x) \leq \sigma (x)$. Since $a$ was chosen arbitrarily this would make 
$$
\left( \bigvee_{a \in C(L)} \bigvee_{b \leq_c \sigma(a)} \phi^{v}_{a,b}\right) (x) = \bigvee_{a \in C(L)} \bigvee_{b \leq_c \sigma(a)} ( \phi^v_{a,b} (x) ) \leq \sigma(x).  
$$

For the other inequality we will use the fact that these closure operators are algebraic and look at compact elements of $L$. Let $ l \in C(L)$ and $k \leq_c \sigma(l)$. Then
$$
k \leq \phi_{l,k}^v (l) \leq \bigvee_{b \leq_c \sigma(l)} \phi_{l,b}^v (x) \leq \left( \bigvee_{a \in C(L)} \bigvee_{b \leq_c \sigma(a)} \phi^{v}_{a,b} \right) (l).
$$
  This is true for all $k \leq_c \sigma(l)$, thus 
$$
\sigma(l) = \bigvee_{k \leq \sigma(l)} k \leq \left( \bigvee_{a \in C(L)} \bigvee_{b \leq_c \sigma(a)} \phi^{v}_{a,b} \right) (l).  
$$  
We have now shown both inequalities and thus $\displaystyle \sigma = \bigvee_{a \in C(L)} \bigvee_{b \leq_c \sigma(a)} \phi^{v}_{a,b}$.   
\end{proof}

\begin{theorem}
The lattice $a.c.o.(L)$ is algebraic. 
\end{theorem}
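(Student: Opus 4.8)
The plan is to show directly that every element of $a.c.o.(L)$ is a join of compact elements; since $a.c.o.(L)$ is already known to be a complete lattice (by the theorem asserting that $a.c.o.(L)$ is a complete lattice), this is exactly what it means for $a.c.o.(L)$ to be algebraic. The compact elements that will serve as building blocks are the operators $\phi^k_{a,b}$ with $k,a,b \in C(L)$, which are compact in $a.c.o.(L)$ by Lemma~\ref{phi_compact_L}.

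Given an arbitrary $\sigma \in a.c.o.(L)$, first apply Lemma~\ref{all_join_compact_L} to write $\sigma = \bigvee_{a \in C(L)} \bigvee_{b \leq_c \sigma(a)} \phi^{v}_{a,b}$ where $v = \sigma(0)$. The only obstruction to these summands being compact is that $v$ need not be a compact element of $L$. To remove it, apply Lemma~\ref{join_compact_L} to each summand: since $a \in C(L)$ and $b \leq_c \sigma(a)$ (so $b \in C(L)$), we have $\phi^{v}_{a,b} = \bigvee_{k \leq_c v} \phi^{k}_{a,b}$. Substituting this and using associativity of joins in the complete lattice $a.c.o.(L)$ gives
$$
\sigma = \bigvee_{a \in C(L)} \ \bigvee_{b \leq_c \sigma(a)} \ \bigvee_{k \leq_c v} \phi^{k}_{a,b},
$$
a join of operators each of which has all three parameters compact, hence each compact in $a.c.o.(L)$ by Lemma~\ref{phi_compact_L}. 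Thus $\sigma$ is a join of compact elements of $a.c.o.(L)$.

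Since $\sigma$ was arbitrary, every element of $a.c.o.(L)$ is a join of compact elements, so $a.c.o.(L)$ is compactly generated, i.e.\ algebraic. There is essentially no hard step once Lemmas~\ref{phi_compact_L}, \ref{join_compact_L}, and \ref{all_join_compact_L} are in hand; the only point worth a moment's care is verifying that in the triple-join decomposition every parameter really is compact ($a \in C(L)$ by construction, $b \in C(L)$ since $b \leq_c \sigma(a)$, and $k \in C(L)$ since $k \leq_c v$), so that Lemma~\ref{phi_compact_L} applies to each summand.
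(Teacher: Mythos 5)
Your proposal is correct and follows exactly the paper's own argument: decompose $\sigma$ via Lemma~\ref{all_join_compact_L}, refine each $\phi^{v}_{a,b}$ via Lemma~\ref{join_compact_L}, and invoke Lemma~\ref{phi_compact_L} for compactness of the resulting summands. You simply spell out the details (in particular the explicit appeal to Lemma~\ref{phi_compact_L} and the check that all three parameters are compact) that the paper leaves implicit.
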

\begin{proof} 
 from Lemmas \ref{join_compact_L} and \ref{all_join_compact_L} we see for each $\sigma \in a.c.o.(L)$, $\sigma$ can be written as the join of elements that are the join of compacts. Thus $a.c.o.(L)$ is algebraic. 

\end{proof}



\end{document}